\newtheorem{theo}{Theorem}[section]
\newtheorem{lemm}[theo]{Lemma}
\newtheorem{prop}[theo]{Proposition}
\newtheorem{coro}[theo]{Corollary}
\theoremstyle{definition}
\newtheorem{defi}[theo]{Definition}
\newtheorem{rema}[theo]{Remark}
\newtheorem{exam}[theo]{Example}
\theoremstyle{theorem}
\newcommand{\CC}{\mathbb{C}}
\newcommand{\KK}{\mathbb{K}}
\newcommand{\RR}{\mathbb{R}}
\newcommand{\ZZ}{\mathbb{Z}}
\newcommand{\Aa}{\mathcal{A}}
\newcommand{\Dd}{\mathcal{D}}
\newcommand{\Hh}{\mathcal{H}}
\newcommand{\Ll}{\mathcal{L}}
\newcommand{\Mm}{\mathcal{M}}
\newcommand{\Oo}{\mathcal{O}}
\newcommand{\Ho}{\mathrm{Ho}}
\newcommand{\Ker}{\mathrm{Ker}}
\newcommand{\Img}{\mathrm{Im}}
\newcommand{\ov}{\overline}
\newcommand{\del}{\partial}
\newcommand{\delb}{{\overline\partial}}
\newcommand{\lra}{\longrightarrow}
\newcommand{\ch}[1]{\mathrm{Ch^*({#1})}}
\newcommand{\MHS}{{\mathsf{MHS}}}
\newcommand{\MHC}{{\mathrm{MHC}}}
\newcommand{\BV}{{\mathrm{BV}}}
\newcommand{\Hycom}{{\mathrm{Hycom}}}
\newcommand{\Vect}{\mathrm{Vect}}
\newcommand{\Alg}{\mathrm{Alg}}
\newcommand{\Hom}{\mathrm{Hom}}
\newcommand{\Dol}{\mathrm{Dol}}
\newcommand{\dR}{\mathrm{dR}}
\newcommand{\Id}{\mathrm{Id}}
\newcommand{\vp}{\varphi}
\title[Formality of hypercommutative algebras of Kähler
and Calabi--Yau manifolds]{Formality of hypercommutative algebras of Kähler\\
and Calabi--Yau manifolds}
\author{Joana Cirici}
\author{Geoffroy Horel}
\address[J. Cirici]{Departament de Matemàtiques i Informàtica, Universitat de Barcelona\\
Gran Via 585\\
08007 Barcelona, Spain  / Centre de Recerca Matemàtica, Edifici C, Campus Bellaterra, 08193 Bellaterra, Spain}
\email{jcirici@ub.edu}
\address[G. Horel]{Université Sorbonne Paris Nord, Laboratoire Analyse, Géométrie et Applications, CNRS (UMR 7539), 93430, Villetaneuse, France.}
\email{horel@math.univ-paris13.fr}
\thanks{
J.~C.~acknowledges Govern de Catalunya (2021-SGR-00697 and Serra H\'{u}nter Program) and the Spanish State Research Agency (CEX2020-001084-M, EUR2023-143450, PID2020-117971GB-C22 and PID2024-155646NB-I00).
G. Horel acknowledges support from  a grant of the Romanian Ministry of Education and Research, CNCS - UE-
FISCDI, project number PN-III-P4-ID-PCE-2020-2798. Both authors are funded by the ANR-20-CE40-0016 HighAGT and the CNRS IEA00979.
}
\begin{document}

\begin{abstract}
Any Batalin--Vilkovisky algebra with a homotopy trivialization of the BV-operator gives rise to a hypercommutative algebra structure at the cochain level which, in general,
contains more homotopical information than the hypercommutative algebra introduced by Barannikov and Kontsevich on cohomology. In this paper, we use the purity of mixed Hodge structures to show that the canonical hypercommutative algebra defined on any compact Calabi--Yau manifold is formal. We also study related hypercommutative algebras associated to compact Kähler and Hermitian manifolds.
\end{abstract}

\maketitle

\section{Introduction}
Motivated by the mirror symmetry program,
Barannikov and Kontsevich defined a canonical hypercommutative algebra structure on the cohomology of any compact Calabi--Yau manifold \cite{BaKo}.
More generally, they gave a recipe for obtaining such a structure on the cohomology of any Batalin--Vilkovisky algebra satisfying the so-called \textit{$d\Delta$-condition}. This recipe, described in detail by Manin \cite{Maninbook}, \cite{Manin3},
arises as an abstract interpretation of the Tian--Todorov lemma leading to unobstructedness of the deformation theory of Calabi--Yau manifolds. It has been successfully applied to various geometric settings, such as symplectic manifolds satisfying Hard Lefschetz \cite{Merkulov} and compact Kähler manifolds \cite{CaZh}.

More recently, Drummond-Cole and Vallette \cite{DCV} refined the above recipe by obtaining a homotopy hypercommutative
structure on cohomology 
which allows one to recover the homotopy type of the original Batalin--Vilkovisky algebra.
Its truncation to a strict hypercommutative algebra is the one defined by Barannikov and Kontsevich via deformation theory. 
The scope of this refined theory is wider, in the sense that the  $d\Delta$-condition on the initial Batalin--Vilkovisky algebra is replaced by a weaker condition called \textit{Hodge-to-de-Rham degeneration}.
This turns out to be a sufficient condition for the existence of a
homotopy trivialization of the Batalin--Vilkovisky operator. The emergence of a hypercommutative algebra structure from this data
is a reflection of the fact that the homotopy quotient of the framed little discs operad by rotations
is weakly equivalent to the operad of moduli spaces of stable genus 0 curves \cite{DCRotations}.
In \cite{KMS}, Khoroshkin, Markarian and Shadrin proved an algebraic counterpart of this result, providing an explicit quasi-isomorphism between the operad  $\Hycom=\{H_*(\overline{\mathcal{M}}_{0,\bullet+1})\}$ governing hypercommutative algebras and the
homotopy quotient $\BV/\Delta$ governing 
Batalin--Vilkovisky algebras enhanced with a trivialization of the BV-operator. In particular, the hypercommutative structure associated to a Batalin--Vilkovisky algebra exists at the cochain level, rather than on cohomology.

As shown in \cite{DCV}, even under the stronger $d\Delta$-condition, the hypercommutative structure at the cochain level is not formal in general.
The authors ask whether, in the case of Calabi--Yau manifolds, the hypercommutative structure obtained by Barannikov and Kontsevich
on its cohomology, contains all the information of the homotopy type of the original Batalin--Vilkovisky algebra, thus giving formality.
Drummond-Cole gave an affirmative answer in the case of low-dimensional Calabi--Yau manifolds \cite{DC}.
In this paper, we give a positive answer in the general case of arbitrary dimension.
\medskip 

The standard proof of the well-known Formality Theorem for the de Rham algebra of compact Kähler manifolds \cite{DGMS}
uses the fact that the $\del\delb$-condition gives quasi-isomorphisms of commutative algebras 
\[(A,\delb,\wedge)\hookleftarrow (\Ker(\del),\delb,\wedge)\twoheadrightarrow(H,0,\wedge).\]
In the case of a Batalin--Vilkovisky algebra $(A,d,\cdot,\Delta)$ satisfying the $d\Delta$-condition one is led to consider an analogous string of quasi-isomorphisms
\[(A,d,\cdot,\Delta)\hookleftarrow (\Ker(\Delta),d,\cdot,0)\twoheadrightarrow(H,0,\cdot,0).\]
However, since $\Ker(\Delta)$ is not a commutative algebra with respect to the commutative product, the above maps do not lead to formality of the $\BV$-algebra, but only of
the associated Lie algebra, whose Lie bracket measures the failure of $\Delta$ to be a derivation. 
The formality of such a Lie algebra immediately leads to the unobstructedness of its associated deformation theory.

A more abstract approach to the formality of Kähler manifolds uses the fact that their cohomology carries pure Hodge structures, and that the filtrations defining such Hodge structures exist at the cochain level. 
More generally, the theory of mixed Hodge structures also leads to formality results in certain situations. 
Both the operad of Batalin--Vilkovisky algebras and the hypercommutative operad may be described as homologies of geometric objects and as such, they carry natural mixed Hodge structures satisfying certain purity properties. Moreover, the quasi-isomorphism of operads
$\Hycom\lra \BV/\Delta$, relating hypercommutative algebras with Batalin--Vilkovisky algebras with a trivialization of the BV-operator, is compatible with such mixed Hodge structures.
It therefore makes sense to talk about hypercommutative algebras in mixed Hodge structures and, more generally, in mixed Hodge complexes.
We actually consider $\CC$-mixed Hodge objects, for which there is no real structure and there are two (possibly unrelated) filtrations $F$ and $\overline{F}$ that are $n$-opposed in each $n$-graded piece of the weight filtration.
In this paper, we adapt the ``purity implies formality'' machinery of \cite{CiHo} to show that any hypercommutative algebra in $\CC$-mixed Hodge complexes whose Hodge structure is pure in cohomology, is formal as a hypercommutative algebra in complexes of vector spaces.
We study three geometric instances where this situation applies:

\subsection*{I. Kähler manifolds}
Consider the complex de Rham algebra
$\Aa:=\Aa^*_{\mathrm{dR}}(M)\otimes_\RR\CC$  of a complex manifold $M$. Given a Hermitian metric one obtains a decomposition $d^*=\delb^*+\del^*$ of the formal adjoint to $d=\delb+\del$.
Note that $\delb^*$ and $\del^*$ are not derivations in general. Instead, as shown by Cao--Zhou in \cite{CaZh} and \cite{CaZh2}, in the case of a Kähler manifold,
the tuples 
\[\Aa_{\mathrm{Dol}}:=(\Aa,d=\delb,\wedge,\Delta=-i\del^*)\text{ and }\Aa_{\mathrm{dR}}:=(\Aa,d=\delb+\del,\wedge,\Delta=i(\delb^*-\del^*))\] are $\BV$-algebras which satisfy the $d\Delta$-condition. Furthermore, there is a canonical trivialization of the operator $\Delta$, given in terms of the Green operator.
This allows one to talk about canonically induced hypercommutative structures in cohomology.

The following result promotes the Formality Theorem of \cite{DGMS} of compact Kähler manifolds to the hypercommutative setting:

\newtheorem*{I1}{\normalfont\bfseries Theorem \ref{chineseformality}}
\begin{I1}
For any compact Kähler manifold, the canonical hypercommutative structures defined on $\Aa_{\Dol}$ and $\Aa_{\dR}$
are formal and quasi-isomorphic.
\end{I1}

In particular, the canonically induced hypercommutative algebra structures on Dolbeault and complex de Rham cohomology of any compact Kähler manifold are isomorphic and carry no higher homotopy hypercommutative operations.

\subsection*{II. Calabi--Yau manifolds}
Let now $M$ be a compact Calabi--Yau manifold of complex dimension $m$ and consider the
complex bigraded algebra given by 
\[\Ll^{p,q}:=\Gamma(M,\Lambda^p T\otimes \Lambda^q\ov{T}^*),\]
where $T$ denotes the holomorphic tangent bundle of $M$. The algebra structure is determined by 
the exterior product $\wedge$.
There is an isomorphism of bigraded vector spaces $\eta:\Ll^{p,q}\to \Aa^{m-p,q}$ given by the contraction $\alpha\mapsto \alpha \vdash \Omega$
with the nowhere vanishing holomorphic $m$-form $\Omega$. Note that such an isomorphism does not preserve the product structures. In fact, the exterior product on $\Aa$ has different bidegree. Consider on $\Ll$ the operators
$\delb_\eta:=\eta^{-1}\circ \delb\circ \eta$ and $\del_\eta:=\eta^{-1}\circ \del\circ \eta$.
Barannikov and Kontsevich showed in \cite{BaKo} that the tuple 
\[\Ll_{\Dol}:=(\Ll,d=\delb_\eta,\wedge, \Delta=\del_\eta)\] is a Batalin--Vilkovisky algebra satisfying the $d\Delta$-condition.
As conjectured by Cao--Zhou in \cite{CaZh}, this algebra should correspond, via mirror symmetry, to the
algebra $\Aa_{\mathrm{Dol}}$ introduced earlier. With this idea in mind, and with the objective to obtain an object in mixed Hodge complexes, we introduce a ``de Rham version'' of this Batalin--Vilkovisky algebra
\[\Ll_{\dR}:=(\Ll^{*,*},d:=\delb_\eta+\del^*_\eta,\wedge,\Delta:=\del_\eta-\delb^*_\eta)\]
which, conjecturally, corresponds to $\Aa_{\dR}$ through mirror symmetry. The incorporation of the extra terms in both the differential and the BV-operator 
allows for the spectral sequences associated to both the column and row filtrations to degenerate at the first page. This is a necessary condition for obtaining a hypercommutative algebra in $\CC$-mixed Hodge complexes.
We show:
\newtheorem*{I2}{\normalfont\bfseries Theorem \ref{mainthm}}
\begin{I2}
For any compact Calabi--Yau manifold, the canonical hypercommutative structures defined on $\Ll_{\Dol}$ and $\Ll_{\dR}$ are formal and quasi-isomorphic.
\end{I2}

\subsection*{III. Hermitian manifolds}
A third geometric situation arises by considering, on a complex manifold, the tuple $(\Aa,\delb,\wedge,\del)$ as a $\BV$-algebra. In this case, $\del$ is obviously a derivation and so this is a $\BV$-algebra whose associated Lie bracket is trivial.
In the compact Kähler case, the $\del\delb$-condition immediately implies that such a $\BV$-algebra is formal and the induced hypercommutative structure is trivial. The interesting situation is therefore outside the Kähler setting. We show that for a compact Hermitian manifold, the induced hypercommutative algebra structure is not formal in general, and contains more information than the purely commutative structure. We do this by computing higher operations on the Kodaira-Thurston manifold.

\medskip 

The paper is organized as follows. In Section \ref{secBV} we collect preliminaries on Batalin--Vilkovisky and hypercommutative algebras. In particular, we recall, following \cite{KMS}, the procedure for obtaining a hypercommutative algebra structure at the cochain level, from a Batalin--Vilkovisky algebra with a trivialization of $\Delta$. Section \ref{secMHS} concerns mixed Hodge structures and formality. We first adapt the ``purity implies formality'' result of \cite{CiHo} to the setting of complex mixed Hodge structures (rather than real mixed Hodge structures) and then prove a general result of formality for operadic algebras with pure cohomology whose governing operad is also pure.
Lastly, in Section \ref{secGeom} we study the three main geometric situations mentioned above.

\subsection*{Acknowledgments}
We would like to thank Vladimir Dotsenko and Fernando Muro for answering our questions and Gabriel Drummond-Cole for introducing us to the problem that led to this work.

This project started during the authors's research stay at the Institut Mittag-Leffler, within the program 
\textit{Higher algebraic structures in algebra, topology and geometry} and continued during the research stay of the second author at the Institut de Matemàtiques de la Universitat de Barcelona (IMUB). We thank both institutes for the great working conditions and hospitality.

\section{From Batalin--Vilkovisky to hypercommutative algebras}\label{secBV}
In this preliminary section, we collect some results on double complexes, Batalin--Vilkovisky algebras and hypercommutative algebras.

\subsection{Double complexes and the $d\Delta$-condition}
Let $(A,d,\Delta)$ be a double complex of vector spaces, with $d$ and $\Delta$ linear operators of degree $+1$ and $-1$ respectively and satisfying 
\[d^2=\Delta^2=d\Delta+\Delta d=0.\]
Note for the general set-up in this section, double complexes will be $\ZZ$-graded. These are also called \textit{mixed complexes} in the literature. In the geometric situations we consider, double complexes will always carry a $\ZZ^2$-grading.

\begin{defi}
The  \textit{$d\Delta$-condition} is satisfied if and only if
\[\Ker(d)\cap \Img(\Delta)=\Ker(\Delta)\cap\Img(d)=\Img(d\Delta).\]
\end{defi}

Associated to a double complex there are various cohomologies:
aside from the $d$- and $\Delta$-cohomologies
\[H_d:={{\Ker(d)}\over{\Img(d)}}\quad\text{ and }\quad H_\Delta:={{\Ker(\Delta)}\over{\Img(\Delta)}}\]
we also have the Bott-Chern $H_{BC}$ and Aeppli $H_A$ cohomologies, given respectively by
\[H_{BC}:={{\Ker(d)\cap \Ker(\Delta)}\over{\Img(d\Delta) }}\quad\text{ and }\quad
H_{A}:={{\Ker(d \Delta)}\over{\Img(d)+\Img(\Delta)}}.\]
The identity induces maps 
\[\xymatrix{
H_{BC}\ar[d]\ar[r]&H_\Delta\ar[d]\\
H_d\ar[r]&H_A
}\]
which are isomorphisms if and only if the $d\Delta$-condition is satisfied (see Lemma 5.15 of \cite{DGMS}). 
In particular, in this case there is ``only one cohomology'', which we just denote by $H$.
The following is straightforward:
\begin{lemm}\label{quises}Let $(A,d,\Delta)$ be a double complex satisfying the $d\Delta$-condition. Then:
  \begin{enumerate}[(i)]
  \item The inclusions of complexes $(\Ker(\Delta),d)\hookrightarrow (A,d)$ and $(\Ker(d),\Delta)\hookrightarrow (A,\Delta)$ are quasi-isomorphisms.
   \item The surjections of complexes $(\Ker(\Delta),d)\twoheadrightarrow (H,0)$ and $(\Ker(d),\Delta)\twoheadrightarrow (H,0)$
are quasi-isomorphisms.
 \end{enumerate}
\end{lemm}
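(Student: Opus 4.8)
The plan is to identify the cohomologies of the truncated complexes with Bott--Chern (and $\Delta$-) cohomology, and then invoke the comparison isomorphisms recorded in the diagram preceding the lemma. Everything is symmetric under exchanging $d\leftrightarrow\Delta$ (the $d\Delta$-condition, as well as $H_{BC}$ and $H_A$, are invariant under this swap), so I would only treat the statements involving $(\Ker(\Delta),d)$ and deduce the others by this symmetry.

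First I would check that $(\Ker(\Delta),d)$ is indeed a subcomplex of $(A,d)$: if $\Delta x=0$ then $\Delta(dx)=-d(\Delta x)=0$, so $d$ preserves $\Ker(\Delta)$. Its cohomology is $(\Ker(d)\cap\Ker(\Delta))/d(\Ker(\Delta))$. The crucial step is the identity $d(\Ker(\Delta))=\Img(d\Delta)$. The inclusion $\supseteq$ is immediate since $\Delta x\in\Ker(\Delta)$; for $\subseteq$, given $y\in\Ker(\Delta)$ one has $dy\in\Img(d)$ and also $dy\in\Ker(\Delta)$, so $dy\in\Ker(\Delta)\cap\Img(d)=\Img(d\Delta)$ by the $d\Delta$-condition. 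Consequently the cohomology of $(\Ker(\Delta),d)$ is exactly $H_{BC}$, and the inclusion into $(A,d)$ induces on cohomology precisely the natural comparison map $H_{BC}\to H_d$, which is an isomorphism by Lemma 5.15 of \cite{DGMS}. This proves the first half of (i), and the second half follows verbatim by the $d\leftrightarrow\Delta$ symmetry.

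For (ii) I would consider the quotient map $q\colon\Ker(\Delta)\twoheadrightarrow \Ker(\Delta)/\Img(\Delta)=H_\Delta$. Since $d\Delta=-\Delta d$, we have $\Img(d\Delta)\subseteq\Img(\Delta)$, and combined with the identity above this gives $d(\Ker(\Delta))=\Img(d\Delta)\subseteq\Img(\Delta)$; hence $q\circ d=0$, so $q$ is a (manifestly surjective) chain map $(\Ker(\Delta),d)\to(H_\Delta,0)$. On cohomology it sends the class of a cocycle $x\in\Ker(d)\cap\Ker(\Delta)$ to its class in $H_\Delta$, that is, it realizes the natural map $H_{BC}\to H_\Delta$, again an isomorphism by Lemma 5.15 of \cite{DGMS}. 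Identifying $H$ with $H_\Delta$ (respectively, by symmetry, with $H_d$) yields the two surjections.

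There is no serious obstacle here; the only point genuinely requiring the $d\Delta$-condition is the identity $d(\Ker(\Delta))=\Img(d\Delta)$, after which the argument reduces to recognizing each induced map as one of the comparison morphisms of the diagram and quoting its invertibility.
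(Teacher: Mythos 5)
Your proof is correct and is essentially the argument the paper has in mind: the paper offers no written proof (it calls the lemma straightforward immediately after recalling the comparison diagram), and your route --- establishing $d(\Ker(\Delta))=\Img(d\Delta)$ from the $d\Delta$-condition, identifying $H^*(\Ker(\Delta),d)$ with $H_{BC}$, and recognizing the induced maps as the comparison morphisms $H_{BC}\to H_d$ and $H_{BC}\to H_\Delta$, invertible by Lemma 5.15 of \cite{DGMS} --- is exactly the intended one, with the $d\leftrightarrow\Delta$ symmetry handling the remaining cases. One cosmetic quibble: the $d\Delta$-condition is needed not only for the identity $d(\Ker(\Delta))=\Img(d\Delta)$ but also for the invertibility of the comparison maps themselves, though your citation of Lemma 5.15 already covers this.
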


\begin{defi} A \textit{deformation retract} for a cochain complex $(A,d)$ is given by a diagram
\[
\begin{tikzcd}[ampersand replacement = \&]
(A,d) \arrow[r, rightarrow, shift left, "\rho"] \arrow[r, shift right, leftarrow, "\iota" swap] \arrow[loop left, "h"] \& (H_d,0).
\end{tikzcd}
\]
where $\iota$ and $\rho$ are morphisms of complexes
such that 
$\rho \iota=\mathrm{Id}$
and $h$ is a homotopy of morphisms of cochain complexes from $\iota\rho$ to the identity, so that $dh+hd=\iota\rho-\Id$.
\end{defi}

\begin{lemm}\label{HTData}Let $(A,d,\Delta)$ be a double complex satisfying the $d\Delta$-condition. Then
there is a deformation retract $(\iota,\rho,h)$
such that $\Delta \iota=0$, $\rho\Delta=0$ and $h\Delta+\Delta h=0.$
\end{lemm}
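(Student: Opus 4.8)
The plan is to reduce the construction to the local structure of a double complex satisfying the $d\Delta$-condition. First I would record the two algebraic identities that make everything work:
\[\big(\Ker(d)\cap\Ker(\Delta)\big)\cap\big(\Img(d)+\Img(\Delta)\big)=\Img(d\Delta)\quad\text{and}\quad \Img(d)\cap\Img(\Delta)=\Img(d\Delta).\]
Both are short diagram chases using only the three equalities defining the $d\Delta$-condition (e.g. for the first, if $x=da+\Delta b\in\Ker(d)\cap\Ker(\Delta)$, then $\Delta(da)=0$ forces $da\in\Ker(\Delta)\cap\Img(d)=\Img(d\Delta)$, and symmetrically for $\Delta b$). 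Now choose a complement $\Hh$ of $\Img(d\Delta)$ inside $\Ker(d)\cap\Ker(\Delta)$. The composite $\Hh\hookrightarrow\Ker(d)\to H_d$ is precisely the map $H_{BC}\to H_d$, hence an isomorphism under the $d\Delta$-condition. I define $\iota\colon(H_d,0)\to(A,d)$ as the inclusion of these ``harmonic'' representatives; since $\Hh\subseteq\Ker(d)\cap\Ker(\Delta)$ this gives $d\iota=0$ and, for free, $\Delta\iota=0$.

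Next I would define $\rho$. The first identity above gives $\Hh\cap(\Img(d)+\Img(\Delta))=0$, so I may extend $\Hh\oplus(\Img(d)+\Img(\Delta))$ to a direct sum decomposition of $A$ and let $\rho$ be the projection onto $\Hh$ followed by the isomorphism $\Hh\cong H_d$. Since $\rho$ vanishes on $\Img(d)$ and on $\Img(\Delta)$, this simultaneously yields that $\rho$ is a morphism of complexes ($\rho d=0$), that $\rho\Delta=0$, and that $\rho\iota=\Id$.

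The substance of the lemma is the homotopy $h$, which must both witness $dh+hd=\Id-\iota\rho$ and satisfy $h\Delta+\Delta h=0$; this is the main obstacle, since a naive contracting homotopy for $(A,d)$ need not interact well with $\Delta$. To handle it I would invoke the structure theory of double complexes over a field: under the $d\Delta$-condition every indecomposable summand of $(A,d,\Delta)$ is either a \emph{dot} (one-dimensional, inside $\Ker(d)\cap\Ker(\Delta)$) or a \emph{square} spanned by $a,\ da,\ \Delta a,\ d\Delta a$ with these four elements independent; any longer zig-zag is excluded precisely by the intersection identities above. On dots I set $h=0$ and take $\iota,\rho$ to be identities; on a square $H_d=0$, so $\iota=\rho=0$, and I define $h(da)=a$, $h(d\Delta a)=\Delta a$, $h(a)=h(\Delta a)=0$. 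A direct check gives $dh+hd=\Id$ on the square, and using $\Delta(da)=-d\Delta a$ together with $\Delta a,\,d\Delta a\in\Ker(\Delta)$ one verifies $h\Delta+\Delta h=0$ term by term (the only nontrivial term is $h\Delta(da)+\Delta h(da)=-h(d\Delta a)+\Delta a=0$). Since $d$ and $\Delta$ preserve each summand and $h$ does too, assembling these local choices produces global $\iota,\rho,h$ with all required properties.

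Finally I would address the one technical point: the spaces in the geometric examples are infinite-dimensional, so I must ensure the decomposition into squares and dots is available in the relevant bounded bigraded setting. If one prefers to sidestep the structure theorem, the same $h$ can be built directly by choosing compatible splittings of $\Img(d)$, $\Img(\Delta)$ and $\Img(d\Delta)$ as above and defining partial inverses of $d$; the delicate part there is arranging the splittings so that $h\Delta+\Delta h=0$ holds, which is exactly what working square by square makes automatic. I expect the verification of $h\Delta+\Delta h=0$ to be the only place where the precise signs in $d\Delta+\Delta d=0$ are genuinely used.
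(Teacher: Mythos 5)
Your proof is correct and is essentially the paper's own argument: your dots-and-squares decomposition is precisely the degreewise direct sum decomposition $A^{n}=H^{n}\oplus S^{n}\oplus dS^{n-1}\oplus \Delta S^{n+1}\oplus d\Delta S^{n}$ that the paper imports from \cite[Proposition 3.17]{DGMS} (the dots being $H$ and each square spanned by $y,\ dy,\ \Delta y,\ d\Delta y$ for $y\in S$), and your formulas $h(da)=a$, $h(d\Delta a)=\Delta a$, $h=0$ elsewhere agree, signs included, with the paper's $h(x,y,dz,\Delta w,d\Delta t)=(0,z,0,\Delta t,0)$. The one point where you hesitate---the availability of the indecomposable decomposition in the unbounded, infinite-dimensional setting---is exactly what the citation to \cite{DGMS} supplies, as that decomposition is obtained degreewise by choosing linear complements, with no finiteness hypotheses needed.
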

\begin{proof}
By  \cite[Proposition 3.17]{DGMS}, the
 $d\Delta$-condition allows one to write $A$
in each degree as a direct sum decomposition 
\[A^{n}=H^{n}\oplus S^{n}\oplus dS^{n-1}\oplus \Delta S^{n+1}\oplus d\Delta S^{n},\]
where $H^{n}$ is the cohomology of $A$ in degree $n$ and the differentials $d$ and $\Delta$ are given by
\[d(x,y,dz,\Delta w,d\Delta t)=(0,0,dy,0,d\Delta w)\text{ and }\Delta(x,y,dz,\Delta w,d\Delta t)=(0,0,0,\Delta y,-d\Delta z).\]
Define $\iota:H\to A$ and $\rho:A\to H$ respectively by inclusion and projection to the first component. 
Define $h:A^{p,q}\to A^{p,q-1}$ by 
$h(x,y,dz,\Delta w,d\Delta t):=-(0,z,0,\Delta t,0).$
\end{proof}

\subsection{Batalin--Vilkovisky algebras}A Batalin--Vilkovisky algebra is a Gerstenhaber algebra together with a compatible unary operation squaring to zero.
While Gerstenhaber algebras may be identified with algebras over the homology of the little disks operad, the operad $\BV$ encoding
Batalin--Vilkovisky algebras is given by the homology of the framed little disks operad \cite{Getzler2D}.

\begin{defi}
 A \textit{$\BV$-algebra structure} on a cochain complex $(A,d)$ is given by a symmetric binary product
 $\cdot:A\times A\lra A$ of degree 0,
 together with a unary operator $\Delta:A\to A$ of degree -1, such that:
 \begin{enumerate}[(1)]
 \item The triple $(A,d,\Delta)$ is a double complex: \[d^2=d\Delta+\Delta d=\Delta^2=0.\]
 \item The triple $(A,d,\cdot)$ is a dg-commutative algebra: 
 \[x\cdot y=(-1)^{|x|\cdot|y|}y\cdot x\quad\text{ and }\quad d(x\cdot y)=dx\cdot y+(-1)^{|x|}x\cdot dy.\]
 \item The binary operation $[-,-]:A\times A\lra A$ defined by 
  \[[x, y]:=(-1)^{|x|}(\Delta(x\cdot y)-(\Delta(x)\cdot y)-(-1)^{|x|} x\cdot \Delta(y))\]
  defines, for all $x$, a derivation $[x, -]$ of degree $|x|-1$ with respect to the commutative product:
   \[[x,y\cdot z]=[x, y]\cdot z+(-1)^{(|x|-1)\cdot |y|}y\cdot[x, z].\]
\end{enumerate}
  \end{defi}

\begin{rema}
Note that the last condition is equivalent to imposing the relation
\[\arraycolsep=4pt\def\arraystretch{1.4}
\begin{array}{ll}
 \Delta(x\cdot y\cdot z)=&\Delta(x\cdot y)\cdot z+(-1)^{|x|}x\cdot \Delta(y\cdot z)+(-1)^{(|x|-1)\cdot |y|}y\cdot \Delta(x\cdot z)-\\&-\Delta(x)\cdot y\cdot z -(-1)^{|x|}x\cdot \Delta(y)\cdot z-(-1)^{|x|+|y|}x\cdot y\cdot \Delta(z)\end{array}\]
 appearing in \cite{Getzler2D} to define a Batalin--Vilkovisky algebra. In particular, by Proposition 1.2 of \cite{Getzler2D},
given a BV-algebra $(A,d,\cdot,\Delta)$ with associated bracket $[-,-]$, then 
$(A,d,\cdot,[-,-])$ is a Gerstenhaber algebra and so $(A,d,[-,-])$ is a dg-Lie algebra. 
\end{rema}

\begin{rema}
The $d\Delta$-condition implies in particular that
the associated dg-Lie algebra $(A, [-,-], d)$ is both formal and quasi-abelian. 
Indeed, we have quasi-isomorphisms of dg-Lie algebras
\[(A,[-,-],d)\stackrel{\sim}{\longleftarrow}(\Ker(\Delta),[-,-],d)\stackrel{\sim}{\longrightarrow}(H,0,0).\]
Note however that 
the above are not maps of $\BV$-algebras: the Lie bracket is an obstruction for $\Delta$ to be a derivation, and so
in general $(\Ker(\Delta),\cdot)$ is not a commutative algebra.
Therefore these maps do not give formality of the $\BV$-algebra.
\end{rema}

\subsection{Hypercommutative algebras}\label{BVtoHycom}
Following \cite{KMS}, we review the equivalence between the category of Batalin--Vilkovisky algebras enhanced with the trivialization of $\Delta$  and the category of hypercommutative algebras.

\begin{defi}
A \textit{hypercommutative algebra} is a cochain complex $(A,d)$ with a sequence of
graded
symmetric $n$-ary operations
\[m_n:A^{\otimes n}\to A\] of degree $2(2-n)$, compatible with the differential and
satisfying the following generalized associativity condition:
for all $n\geq 0$ and $a,b,c,x_j\in A$, 
\[\sum_{S_1\sqcup S_2=\{1,\cdots,n\}} \pm m_*(m_*(a,b,x_{S_1}),c,x_{S_2})=
\sum_{S_1\sqcup S_2=\{1,\cdots,n\}} \pm m_*(a,m_*(b,c,x_{S_1}),x_{S_2}),
\]
where $\pm$ is the Koszul sign rule and $x_S$ denotes $x_{s_1},\cdots,x_{s_m}$ for 
$S=\{s_1,\cdots,s_m\}$.
\end{defi}

\begin{rema}
We may think of the operations in a hypercommutative algebra as the Taylor coefficients
of a formal deformation of the commutative product: the above relations are equivalent
to asking that the binary operation 
\[(a,b)\mapsto \sum {\hbar^n\over n!} m_{n+2}(a,b,x,\cdots,x)\in A[[\hbar]]\]
is associative for any $x$ (see \cite{Getzler}).
\end{rema}

The operad 
$\Hycom$ encoding hypercommutative algebras is a cyclic operad which may be identified with the homology  $H_*(\ov{\Mm}_{0,\bullet+1})$
of the moduli spaces of stable genus zero curves and marked points \cite{Getzler}, \cite{KoMa}. With this identification,
the arity $n$ generator in $\mathrm{Hycom}$  corresponds to the fundamental 
class \[[\ov{\Mm}_{0,n+1}]\in H_{2(n-2)}(\ov{\Mm}_{0,n+1}).\] 

In \cite{KMS}, Khoroshkin,  Markarian and Shadrin
give a quasi-isomorphism of operads \[\mathrm{Hycom}\lra \BV/\Delta,\] where 
$\BV/\Delta$ denotes the homotopy quotient of the operad $\BV$ by the generator $\Delta$.

The operad $\BV/\Delta$ is defined by adding, to $\BV$, generators $\vp_i$ of degree $-2i$, for $i\geq 1$ together with a differential $\delta$ defined as follows:
write $\vp(z):=\sum_{i\geq 1} \vp_i z^i$, where $z$ is a formal parameter. The expansion of the \textit{gauge equation}
\[
\exp(\vp(z)) d=(d+\Delta z)\exp(\vp(z))
\]
gives the various terms
\[\arraycolsep=4pt\def\arraystretch{1.4}
\begin{array}{l}
\Delta=[d,\vp_1],\\
0=[d,\vp_2]+{1\over 2}[[d,\vp_1],\vp_1],\\
0=[d,\vp_3]+[[d,\vp_1],\vp_2]+[[d,\vp_2],\vp_1]+{1\over 6}[[[d,\vp_1],\vp_1],\vp_1],\\
\cdots
\end{array}
\]
The differerential $\delta$ on $\vp_i$ is then defined by the equation $\delta \vp_i:=[d,\vp_i]$. The lower terms are:
\[
\arraycolsep=4pt\def\arraystretch{1.4}
\begin{array}{l}
\delta \vp_1= \Delta,\\
\delta\vp_2=-{1\over 2}[\Delta,\vp_1],\\
\delta\vp_3=-[\Delta,\vp_2]+{1\over 3}[[\Delta,\vp_1],\vp_1],\\
\cdots
\end{array}
\]
The map of operads $\mathrm{Hycom}\lra \BV/\Delta$ is defined by assigning, to each generator $m_n\in \Hycom(n)$, an element $\theta_i$ in $\BV/\Delta(n)$ represented by a finite sum of combinations of the arity $2$ product of $\BV$ together with generators $\vp_{j_1},\cdots,\vp_{j_k}$ with $j_1+\cdots+j_k=n-2$
(we refer to \cite[Section 1.2]{KMS} for details). 

\begin{exam}\label{theta3}
The lowest terms are
$\theta_2(x,y)=x\cdot y$ and 
\[
\arraycolsep=4pt\def\arraystretch{1.4}
\begin{array}{ll}
\theta_3(x,y,z)=&
\vp_1(x\cdot y\cdot z)+\vp_1(x)\cdot y\cdot z +x\cdot \vp_1(y)\cdot z +x\cdot y\cdot\vp_1(z)-\\
&-\vp_1(x\cdot y)\cdot z-(-1)^{|y|\cdot|z|}\vp_1(x\cdot z)\cdot y-x\cdot \vp_1(y\cdot z).
\end{array}
\]

\end{exam}

The above description gives a method for producing a hypercommutative algebra from any $\BV$-algebra $(A, d,\wedge,\Delta)$
enhanced with a homotopy that trivializes the BV-operator: assume there is a 
deformation retract $(\iota,\rho,h)$ for the complex $(A,d)$, satisfying the \textit{Hodge-to-de-Rham degeneration condition}
\[\rho(\Delta h)^{k-1}\Delta \iota=0\quad\text{ for all }\quad k\geq 1.\]
Then a solution
$\vp(z):=\sum_{n\geq 0}\vp_n z^n$ to the gauge equation is given by
\[\vp_n={(h\Delta)^n\over n}-n\sum_{\ell=1}^n {(h\Delta)^{\ell-1}\iota\rho (\Delta h)^{n-\ell+1}\over \ell}\]
(see Remark 2.2 of \cite{DSV}). In particular, we get the following proposition.

\begin{prop}\label{hycombigraded}
Let $(A, d,\wedge,\Delta)$ be a $\BV$-algebra together with a deformation retract $(\iota,\rho,h)$ for the complex $(A,d)$, satisfying
$\Delta \iota=\rho \Delta=h\Delta+\Delta h=0.$
Then $(A,d)$ carries a hypercommutative structure, where the operation $m_n$ of degree $4-2n$ is given by 
 a finite composition of the commutative product $\wedge$
together with copies of the unary operation $h\Delta$.
\end{prop}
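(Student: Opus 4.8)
The plan is to invoke the explicit recipe recalled immediately before the statement, whose only hypothesis is the Hodge-to-de-Rham degeneration condition $\rho(\Delta h)^{k-1}\Delta\iota=0$ for all $k\geq 1$. In our situation this holds for a trivial reason: each of these composites terminates on the right with the factor $\Delta\iota$, which vanishes by assumption. Hence the recipe applies and a solution $\vp(z)=\sum_{n\geq 0}\vp_n z^n$ to Equation (\ref{expo}) is furnished by the stated closed formula.

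The first genuine step is to simplify this formula using the two remaining hypotheses. I would examine the correction term
\[
-n\sum_{\ell=1}^n \frac{(h\Delta)^{\ell-1}\iota\rho(\Delta h)^{n-\ell+1}}{\ell}.
\]
For each $1\leq \ell\leq n$ the exponent $n-\ell+1$ is at least $1$, so the factor $\rho(\Delta h)^{n-\ell+1}$ begins, reading from the left, with the composite $\rho\Delta$, which vanishes by hypothesis. Therefore the whole sum is zero and the trivialization reduces to $\vp_n=(h\Delta)^n/n$; in particular each $\vp_n$ is literally a composition of $n$ copies of the unary operator $h\Delta$. As a sanity check, with $\vp_1=h\Delta$ one recovers $\delta\vp_1=[d,h\Delta]=\Delta$ directly from $dh+hd=\Id-\iota\rho$, together with $\rho\Delta=0$ and $d\Delta+\Delta d=0$.

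The second step is to transport this structure along the operad map. The datum $(A,d,\wedge,\Delta)$ equipped with operations $\vp_n$ solving Equation (\ref{expo}) is exactly the structure of an algebra over $\BV/\Delta$, since that equation encodes the compatibility of the $\vp_n$ with the differential $\delta$ defined by $\delta\vp_i=[d,\vp_i]$. Restricting this algebra along the quasi-isomorphism of operads $\Hycom\lra\BV/\Delta$ endows $(A,d)$ with a hypercommutative structure, in which $m_n$ is the realization of the element $\theta_n\in\BV/\Delta(n)$. Since $\theta_n$ is, by its construction in \cite{KMS}, a finite combination of the arity-two product $\wedge$ and generators $\vp_{j_1},\dots,\vp_{j_k}$ with $j_1+\cdots+j_k=n-2$, substituting $\vp_j=(h\Delta)^j/j$ exhibits $m_n$ as a finite composition of $\wedge$ and $h\Delta$. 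The degree matches: each copy of $h\Delta$ has degree $-2$ and there are $n-2$ of them in total, so $m_n$ has degree $4-2n$.

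The computations here are routine once the vanishing of the correction sum is noticed, and I do not anticipate a serious obstacle. The only point requiring care is the bookkeeping: one must use the two auxiliary conditions in the correct roles (the condition $\Delta\iota=0$ to secure the degeneration hypothesis and $\rho\Delta=0$ to kill the correction sum), and one must confirm that the Koszul signs attached to $\theta_n$ in \cite{KMS} are preserved under the substitution, since it is precisely these that ensure the resulting operations $m_n$ satisfy the generalized associativity relations defining a hypercommutative algebra.
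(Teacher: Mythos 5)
Your proof is correct and takes essentially the paper's route: both arguments apply the closed formula of \cite{DSV} for the trivialization $\vp(z)$ recalled just before the statement, observe that the degeneration condition holds trivially because every composite $\rho(\Delta h)^{k-1}\Delta\iota$ ends in $\Delta\iota=0$, kill the correction sum with $\rho\Delta=0$, and transport the resulting $\BV/\Delta$-algebra structure along the quasi-isomorphism $\Hycom\to\BV/\Delta$ of \cite{KMS}. One refinement is worth flagging: you use only two of the three hypotheses and stop at $\vp_n=(h\Delta)^n/n$, whereas the paper's one-line proof also invokes the third, $h\Delta+\Delta h=0$, which together with $\Delta^2=0$ gives
\[
(h\Delta)^2=h(\Delta h)\Delta=-h(h\Delta)\Delta=-h^2\Delta^2=0,
\]
hence $\vp_1=h\Delta$ and $\vp_n=0$ for all $n>1$. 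This does not affect the validity of your argument for the proposition as worded --- your $\vp_n$ are still finite compositions of copies of $h\Delta$, and the degree count $4-2n$ via $j_1+\cdots+j_k=n-2$ goes through unchanged --- but the sharper vanishing is the content of the paper's proof and the form used downstream (only $\vp_1$ survives, matching Example \ref{theta3} and the explicit computations in Section \ref{secGeom}), and noticing it would have explained the role of the hypothesis your write-up explicitly counts as absent when you refer to ``the two auxiliary conditions.''
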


\begin{proof}
It suffices to check that, in this case, we have $\varphi_1=h\Delta$ and $\varphi_n=0$ for all $n>1$.
\end{proof}

\section{Weights and formality}\label{secMHS}

We explain in this section how to obtain formality using the theory of weights.
We adapt the theory developed in \cite{CiHo} to the setting of complex mixed Hodge structures, 
and prove a result of formality for operadic algebras whose cohomology is pure.
\subsection{Complex mixed Hodge structures}

Classically, Hodge structures are defined on a vector space $V$ defined over a subfield of the real numbers.
Then, on the complexification $V_\CC:=V\otimes\CC$, there is a notion of complex conjugation
and having a pure Hodge structure of weight $n$ on $V$ is equivalent to having a filtration 
$F$ on $V_\CC$ which is $n$-opposed to its complex conjugate filtration.
We will instead be considering \textit{complex Hodge structures}. 
These are given by pairs of $n$-opposed filtrations on a complex vector space,
which are not necessarily complex conjugate to each other.

\begin{defi}Let $n$ be an integer.
Two decreasing filtrations $F$ and $\ov{F}$ on a vector space $V$ are said to be \textit{$n$-opposed} if 
\[Gr^p_FGr^q_{\ov{F}}V=0\text{ for all }p+q\neq n.\]
This is equivalent to asking that $F^pV\oplus\ov{F}{}^qV\cong V$ for all $p,q$ such that $p+q=n+1$.
\end{defi}

\begin{defi}A \textit{$\CC$-mixed Hodge structure} is a complex vector space $V$ with an
increasing filtration $W$ and two decreasing filtrations $F$ and $\ov{F}$ 
such that, for all $n$, the filtrations 
$F$ and $\ov{F}$ induced on $Gr_n^WV:=W_{n}V/W_{n-1}V$ are $n$-opposed:
\[Gr^p_FGr^q_{\ov F}Gr_n^W V=0\text{ for all }p+q\neq n.\]
\end{defi}
Denote by $\MHS_\CC$ the category of $\CC$-mixed Hodge structures. It is an abelian symmetric monoidal category.
Morphisms in this category, defined as morphisms of complex vector spaces compatible with all filtrations, 
are in fact strictly compatible with each filtration. This follows from Deligne's splitting lemma,
which is central in the study of mixed Hodge structures:

\begin{lemm}[Deligne's splitting lemma (\cite{DeHII}, Lemma 1.2.11)]\label{DelSplit}
Let $(V,W,F,\ov F)$ be a $\CC$-mixed Hodge structure. There are functorial decompositions
\[V=\bigoplus I^{p,q}_i\quad \text{ for }\quad i=0,1\]
 such that 
 \[F^pV=\bigoplus_{p'\geq p} I^{p',q'}_0,\quad\ov{F}{}^qV=\bigoplus_{q'\geq q} I^{p',q'}_1\quad\text{ and }\quad W_mV=\bigoplus_{m\leq p+q}I^{p,q}_i.\]
\end{lemm}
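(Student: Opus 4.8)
The plan is to construct the two bigradings by transporting Deligne's canonical splitting formula to the $\CC$-linear setting, and then to check the asserted identities by induction on the length of the weight filtration $W$.

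First I would dispose of the pure case, which serves as the base of the induction. On each graded piece the filtrations $F$ and $\ov F$ are opposed by hypothesis, so $Gr^W_n V=\bigoplus_{p+q=n}H^{p,q}$ where $H^{p,q}:=F^pGr^W_nV\cap\ov F^qGr^W_nV$; here a single bigrading simultaneously splits $F$, $\ov F$ and the (trivial) weight. The entire content of the lemma is to lift this associated-graded decomposition to $V$ itself, in a way that is exactly compatible with $F$ (yielding $I^{p,q}_0$) or exactly compatible with $\ov F$ (yielding $I^{p,q}_1$), while splitting $W$.

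Next, following Deligne \cite{DeHII}, I would define
\[ I^{p,q}_0:=\big(F^p\cap W_{p+q}\big)\cap\Big(\ov F^{q}\cap W_{p+q}+\sum_{j\geq 1}\ov F^{q-j}\cap W_{p+q-j-1}\Big), \]
and define $I^{p,q}_1$ by the symmetric formula obtained by exchanging the roles of $(F,p)$ and $(\ov F,q)$. The asymmetry between the ``clean'' filtration and the one entering through the correction terms $\sum_{j\geq 1}\ov F^{q-j}\cap W_{p+q-j-1}$ is exactly what makes $I^{p,q}_0$ and $I^{p,q}_1$ differ in general: in the classical case where $\ov F$ is the complex conjugate of $F$ one has $I^{q,p}_1=\ov{I^{p,q}_0}$ and a single bigrading suffices, whereas here $F$ and $\ov F$ are a priori unrelated, so both bigradings must be kept.

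Finally, I would verify the three identities. Functoriality, and hence strictness of morphisms, is immediate once the formula is fixed, since each $I^{p,q}_i$ is built from $W$, $F$, $\ov F$ by intersections and sums, operations preserved by any morphism of $\MHS_\CC$. The compatibilities with $F$ and with $W$ reduce, after passing to $Gr^W$, to the pure-case computation of the first step: the defining formula is arranged precisely so that the projection $W_{p+q}\twoheadrightarrow Gr^W_{p+q}$ carries $I^{p,q}_0$ isomorphically onto $H^{p,q}$. I expect the \emph{main obstacle} to be the direct sum decomposition $V=\bigoplus_{p,q}I^{p,q}_0$ together with the clean splitting of $W$. The natural route is induction on the length of $W$: working modulo the lowest nonzero weight step, one checks that the correction terms are exactly what is needed so that each $I^{p,q}_0$ both surjects onto the pure piece $H^{p,q}$ of the top graded quotient and meets the lower weight subspace correctly, whence the $I^{p,q}_0$ are independent and span. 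Unwinding the nested intersections and sums in this inductive step is the technical heart; it is Deligne's original argument carried over essentially verbatim, the only genuinely new point being that, lacking a real structure, one is forced to retain two distinct bigradings rather than one.
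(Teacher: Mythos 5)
Your proposal is correct and coincides with the argument the paper relies on: the paper states this lemma without proof, citing Deligne (\cite{DeHII}), and your formula for $I^{p,q}_0$ is exactly Deligne's splitting (your correction sum $\sum_{j\geq 1}\ov F^{q-j}\cap W_{p+q-j-1}$ matches Deligne's $\sum_{j\geq 2}\ov F^{q-j+1}\cap W_{p+q-j}$ after reindexing $j\mapsto j+1$), applied once with $F$ ``clean'' and once with $\ov F$ ``clean'' to produce the two bigradings that replace the single one available when a real structure is present. Since Deligne's proof of the direct sum decomposition and of the compatibilities with $W$ and the clean filtration never uses that $\ov F$ is the complex conjugate of $F$ --- only that the two filtrations are opposed on each $Gr^W_n$ --- your transport of the argument to $\MHS_\CC$, including functoriality (each $I^{p,q}_i$ is built from $W$, $F$, $\ov F$ by sums and intersections) and the inductive verification along the weight filtration, is precisely how the cited result is established.
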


In order to obtain mixed Hodge structures on the cohomology of complex algebraic varieties, Deligne \cite{DeHIII} introduced \textit{mixed Hodge complexes} as
objects somewhat more flexible than complexes of mixed Hodge structures. The following definition is a shifted version of the corresponding notion 
in the setting of $\CC$-mixed Hodge structures.

\begin{defi}
 A \textit{$\CC$-mixed Hodge complex} is a cochain complex $(A,d)$ over $\CC$ with three filtrations $W$, $F$ and $\ov{F}$ satisfying:
\begin{enumerate}
 \item [$(H_0)$] The cohomology $H^*(A)$ is of finite type.
 \item [$(H_1)$] The bifiltered complexes $(A,W,F)$ and $(A,W,\ov F)$ are \textit{$d$-bistrict}: for $G=F$ or $\ov F$,
 the four spectral sequences
$$
\xymatrix@R=.0pc{
E_1(Gr^W_\bullet A,G)\ar@{=}[dd]\ar@{=>}[r]&E_1(A,W)\ar@{=>}[rd]\\
&&H(A)\\
E_1(Gr_G^\bullet A,W)\ar@{=>}[r]&E_1(A,G)\ar@{=>}[ru]\\
}
$$
degenerate at $E_1$.
 \item [$(H_2)$] For all $p$, the filtrations induced by $F$ and $\ov{F}$ on $H^n(Gr_p^WA)$ are $p$-opposed. 
\end{enumerate}
\end{defi}

Denote by $\MHC_\CC$ the category $\CC$-mixed Hodge complexes.
Morphisms are given by morphisms of cochain complexes compatible with all filtrations.
The cohomology of any $\CC$-mixed Hodge complex is a graded $\CC$-mixed Hodge structure. Also, there is an inclusion 
$\ch{\MHS_\CC}\hookrightarrow \MHC_\CC$.
Note that given a mixed Hodge complex in the sense of Deligne, one may always get a $\CC$-mixed Hodge 
complex by taking the décalage of the weight filtration on its complex part.

Beilinson gave an equivalence of categories between the derived category of mixed
Hodge structures and the homotopy category of mixed Hodge complexes. In \cite{CiHo} we gave a symmetric monoidal enhancement of this result in terms of $\infty$-categories. We prove an analogous result for $\CC$-mixed Hodge complexes.

\begin{prop}\label{equivalenceproof}
The inclusion  $\ch{\MHS_\CC}\hookrightarrow \MHC_\CC$ induces an equivalence of symmetric monoidal $\infty$-categories.
\end{prop}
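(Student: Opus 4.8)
The plan is to deduce Proposition~\ref{equivalenceproof} from the real-coefficient version established in \cite{CiHo}, by setting up a parallel argument in which the single Hodge filtration $F$ together with its complex conjugate is replaced by the pair $(F,\ov F)$ of unrelated filtrations. The overall strategy should be the same as in the classical Beilinson picture: one exhibits both sides as localizations of a common underlying diagram category and shows the comparison functor is fully faithful and essentially surjective at the level of homotopy categories, then upgrades this to a symmetric monoidal equivalence of $\infty$-categories. First I would recall that $\MHS_\CC$ is an abelian symmetric monoidal category (stated above) with enough structure---in particular Deligne's splitting Lemma~\ref{DelSplit}---so that its derived $\infty$-category $\ch{\MHS_\CC}$ is defined and symmetric monoidal. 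Then I would observe that the proof in \cite{CiHo} of the real statement is essentially formal: it uses only that the filtrations are strict on morphisms (a consequence of the splitting lemma) and that the weight-graded pieces carry $n$-opposed filtrations, and both of these hold verbatim in the $\CC$-setting.

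Concretely, the key steps I would carry out are as follows. First, establish that the inclusion $\ch{\MHS_\CC}\hookrightarrow\MHC_\CC$ is well-defined and symmetric monoidal on the nose, which is immediate since a complex of $\CC$-mixed Hodge structures satisfies $(H_0)$--$(H_2)$ automatically. Second, show the functor is \emph{homotopically fully faithful}: here the crucial input is that morphisms in $\MHS_\CC$ are strictly compatible with all three filtrations, so that the relevant spectral sequences degenerate and the mapping complexes on both sides compute the same $\mathrm{Ext}$-groups of $\CC$-mixed Hodge structures. Third, prove essential surjectivity, i.e.\ that every $\CC$-mixed Hodge complex is quasi-isomorphic, through a zig-zag in $\MHC_\CC$, to one coming from a complex of $\CC$-mixed Hodge structures; the standard device is to pass to the weight-graded pieces, use $n$-oppositeness to split each $H^n(Gr^W_pA)$, and reassemble via the décalage functoriality. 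Finally, I would promote the equivalence of homotopy categories to an equivalence of symmetric monoidal $\infty$-categories, invoking the machinery of \cite{CiHo} that detects such equivalences from the underlying homotopy-categorical data together with compatibility of the monoidal structures.

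The main obstacle I anticipate is the symmetric monoidal \emph{enhancement} rather than the plain equivalence of homotopy categories. The degeneration and splitting arguments transfer to the $\CC$-setting with only cosmetic changes, but verifying that the comparison is symmetric monoidal at the $\infty$-categorical level requires re-examining whether the constructions of \cite{CiHo}---which may have used the real structure or complex conjugation to produce the monoidal coherence data---depend on those features. I would therefore isolate precisely where complex conjugation enters the real proof and check that each such use can be replaced by the formal role of the second filtration $\ov F$. Because the definitions above were deliberately phrased with two independent filtrations $F$ and $\ov F$ that are merely $n$-opposed on weight-graded pieces (never requiring $\ov F=\overline{F}$), I expect every step to go through, so that the bulk of the proof reduces to citing \cite{CiHo} and indicating the dictionary $\overline{(\,\cdot\,)}\rightsquigarrow\ov F$. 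The remaining genuine content is checking that this dictionary is consistent with the tensor structure, which is where I would concentrate the detailed verification.
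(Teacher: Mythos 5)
Your overall frame agrees with the paper's: reduce to an equivalence of homotopy categories and then follow Beilinson's argument, checking that complex conjugation is never genuinely used. However, you have located the difficulty in the wrong place, and the step you dismiss as standard is exactly the one the paper must reprove. The symmetric monoidal enhancement, which you flag as the main obstacle, costs nothing: since both $\infty$-categories are stable and the inclusion is exact and symmetric monoidal, an equivalence of homotopy categories automatically upgrades to a symmetric monoidal equivalence of $\infty$-categories, so no re-examination of monoidal coherence data is required. The genuine new content is essential surjectivity, namely that every $\CC$-mixed Hodge complex $A$ is isomorphic in $\Ho(\MHC_\CC)$ to its cohomology, and your sketch of this step --- ``split each $H^n(Gr^W_pA)$ by $n$-oppositeness and reassemble via d\'ecalage functoriality'' --- does not work as stated. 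D\'ecalage plays no role in this argument (in the paper it appears only to convert Deligne's conventions into the $\CC$-MHC conventions), and reassembling a trifiltered quasi-isomorphism from weight-graded splittings is precisely the unproven gluing problem.

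Concretely, here is what your dictionary $\overline{(\,\cdot\,)}\rightsquigarrow\ov F$ misses. Deligne's splitting together with the $E_1$-degeneration of the three spectral sequences produces a section $\sigma\colon H^*(A)\to A$ compatible with $(W,F)$ and a \emph{different} section $\ov\sigma$ compatible with $(W,\ov F)$; because $F$ and $\ov F$ are now unrelated, neither map is compatible with all three filtrations, whereas in the real case the second filtration is determined by the first via conjugation and a single section suffices. The paper resolves this by a rectification you would need to supply: $(\sigma-\ov\sigma)(x)$ is a coboundary, strictness of $d$ with respect to $W$ yields a $W$-compatible chain homotopy $\Sigma\colon\sigma\simeq\ov\sigma$, and the mapping cylinder $\mathrm{Cyl}(\sigma)$, equipped with $W$ and $F$, is given the third filtration by transporting $\ov F$ from $\mathrm{Cyl}(\ov\sigma)$ along the isomorphism $(x,a,x')\mapsto(x,a-\Sigma(x),x')$. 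This trifiltered cylinder receives quasi-isomorphisms, compatible with all three filtrations, from both $H^*(A)$ and $A$, which is what closes the argument. Until you provide this (or an equivalent) strictification of the pair $(\sigma,\ov\sigma)$, your third step is a genuine gap rather than a cosmetic transfer from \cite{CiHo}.
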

\begin{proof}
Since both $\infty$-categories are stable and the inclusion is exact and symmetric monoidal, it suffices to prove that
the inclusion induces an equivalence of homotopy categories 
\[\Dd(\MHS_\CC)\stackrel{\sim}{\lra} \mathrm{Ho}(\MHC_\CC).\]
This follows analogously to Beilinson's equivalence for the case of real mixed Hodge structures (see \cite{Beilinson}, see also \cite{CG2}). The only minor difference is the proof that 
every $\CC$-mixed Hodge complex $(A,d,W,F,\ov F)$ is quasi-isomorphic to its cohomology, giving that the inclusion induces an essentially surjective functor at the homotopy level. We explain this below.

Since $H^n(A)$ is a $\CC$-mixed Hodge structure, for every $n\geq 0$ there are decompositions
 \[H^n(A)=\bigoplus H^{p,q}_i\text{ for }i=0,1\]
with the properties of Lemma \ref{DelSplit}.
Take sections $\sigma:H^{p,q}_0\to Z^n(A)$ and $\ov{\sigma}:H^{p,q}_1\to H^n(A)$.
Let $G=F$ or $\ov F$. Since the bifiltered complex $(A,d,W,G)$ is $d$-bistrict, we have
\[W_pG^qH^n(A)=\mathrm{Im}(H^n(W_pG^qA)\to H^n(A)).\]
Since $H^{p,q}_0\subseteq W_{p+q}F^qH^n(A)$ and $H^{p,q}_1\subseteq W_{p+q}\ov F^qH^n(A)$ we get
 a morphism of complexes $\sigma:H^n(A)\to A$ compatible with $W$ and $F$ and a morphism of complexes $\ov\sigma:H^n(A)\to A$ compatible with $W$ and $\ov F$.
Now, for any cohomology class $x\in H^n(A)$, the difference $(\sigma-\ov{\sigma})(x)$ is a coboundary. The strictness of $d$ with respect to $W$ ensures that if $x\in W_p$, then there is  $y\in W_p$ with 
$(\sigma-\ov{\sigma})(x)=dy$. Picking a basis of $H^*(A)$, the assignment $x\mapsto y$
for each basis element $x$ defines a linear map $\Sigma:H^*(A)\to H^*(A)[-1]$ which is a homotopy
 $\Sigma:\sigma\simeq \ov\sigma$ compatible with $W$.
We next strictify the triple $(\sigma,\ov\sigma,\Sigma)$ using the mapping cylinders of $\sigma$ and $\ov\sigma$.
The mapping cylinder of $\sigma$ carries compatible filtrations $W$ and $F$, 
given by 
\[W_mF^p\mathrm{Cyl}^n(\sigma):=W_mF^pH^{n+1}(A)\oplus W_mF^pA^n\oplus W_mF^pH^n(A)\]
with differential $d(x,a,x')=(0,da-\sigma x,0)$.
The mapping cylinder for $\ov\sigma$ is defined analogously, and carries filtrations $W$ and $\ov F$.
There is an isomorphism of complexes compatible with $W$
\[\varphi:\mathrm{Cyl}(\sigma)\to \mathrm{Cyl}(\ov\sigma)\quad\text{given by}\quad 
(x,a,x')\mapsto (x,a-\Sigma(x),x').\]
The filtration $\ov F$ defined on $\mathrm{Cyl}(\sigma)$ by 
\[\ov F{}^p\mathrm{Cyl}(\sigma):=\varphi^{-1}(\ov{F}{}^p\mathrm{Cyl}(\ov\sigma))\]
is compatible with the differential and so $(\mathrm{Cyl}(\sigma),d,W,F,\ov F)$ is a trifiltered complex.
The inclusions $i:H^*(A)\to \mathrm{Cyl}(\sigma)\leftarrow A:j$ are compatible with the three filtrations and induce isomorphisms in cohomology. In particular, the axioms $(H_0)-(H_2)$ for a $\CC$-mixed Hodge complex are verified by 
$\mathrm{Cyl}(\sigma)$ and so $A$ and $H^*(A)$ are isomorphic in $\Ho(\MHC_\CC)$.
\end{proof}

\subsection{Purity implies formality}

On the category of graded vector spaces over a field $\KK$, there are two distinct choices of braiding isomorphisms that make it into a symmetric monoidal category. In one of them, the isomorphism
\[A\otimes B\to B\otimes A\]
sends an elementary tensor of homogeneous elements $a\otimes b$ to $b\otimes a$. In the other $a\otimes b$ is sent to $(-1)^{|a||b|}b\otimes a$. We shall denote these symmetric monoidal structures $Gr\Vect_\KK$ and $gr\Vect_\KK$ respectively.
In the first category, the source for gradings will be the weights of mixed Hodge theory, while in the second category gradings are cohomological.

We recall the definition of formality for algebras over graded operads. There is an obvious symmetric monoidal functor
\[T:gr\Vect_\KK\to \ch{\KK}\]
giving a graded vector space the trivial differential. Let $\Oo$ be an operad in $gr\Vect_{\KK}$. We shall often omit $T$ in the notation and view a graded object as a chain complex via $T$. The functor $T$ induces a functor
\[T:\Alg_{\Oo}(gr\Vect_\KK)\to\Alg_{\Oo}(\ch{\KK})\]

\begin{defi}
An $\Oo$-algebra is said to be \textit{formal} if it is in the homotopical  essential image of the functor $T$.
\end{defi}

\begin{rema}
There are two ways of interpreting this definition depending on whether we consider the model category of $\Oo$-algebras or the $\infty$-category of $\Oo$-algebras. It turns out that these two ways are equivalent thanks to a rigidification theorem due to Haugseng \cite[Theorem 4.10]{haugsengalgebras}. The $\infty$-categorical interpretation will be more convenient to work with in the following.
\end{rema}

\begin{defi}Let $\alpha\neq 0$ be a rational number.
A $\CC$-mixed Hodge complex $(A,d,W,F,\ov{F})$ is said to be \textit{$\alpha$-pure} if
\[Gr_{p}^W H^n(A)=0\text{ for all }p\neq \alpha n.\]
\end{defi}
The full subcategory $\MHC_\CC^{\alpha\text{-pure}}$ of $\CC$-mixed Hodge complexes that are $\alpha$-pure is a symmetric monoidal category.
The main result of \cite{CiHo} gives formality for algebraic varieties whose cohomology is $\alpha$-pure, where $W$ is Deligne's weight filtration. 

We shall now adapt the theory to the case of interest to us. We consider the $\infty$-category $\ch{Gr\Vect_\KK}$ of cochain complexes of graded vector spaces with quasi-isomorphisms inverted. We denote by $\ch{Gr\Vect_\KK}^{\alpha\text{-pure}}$ the full subcategory whose objects are the chain complexes such that
\[H^n(C)_p=0 \text{ for all }p\neq \alpha n.\]
Clearly the symmetric monoidal structure on $\ch{Gr\Vect_\KK}$ restricts to one on $\ch{Gr\Vect_\KK}^{\alpha\text{-pure}}$.

\begin{prop}
Let $\alpha=r/s$ with $r$ and $s$ two coprime integers. The cohomology functor
\[\ch{Gr\Vect_\KK}^{\alpha\text{-pure}}\to gr\Vect_\KK\]
induces a symmetric monoidal equivalence of $\infty$-categories onto the full subcategory of $gr\Vect_\KK$ spanned by graded vector spaces that are zero except in degree divisible by $s$.
\end{prop}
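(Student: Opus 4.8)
The plan is to exhibit an explicit inverse to the cohomology functor and to reduce the statement to two points: essential surjectivity (which is a formality statement) and full faithfulness (which is where purity does the real work). Write $\alpha=r/s$ with $\gcd(r,s)=1$. First I would construct a functor $T$ in the opposite direction: given a graded vector space $V\in gr\Vect_\KK$ concentrated in degrees divisible by $s$, let $TV$ be the cochain complex of graded vector spaces with zero differential whose component in cohomological degree $n$ and weight $p$ is $V_n$ when $p=\alpha n$ and $0$ otherwise. Since $V_n=0$ unless $s\mid n$, the weight $p=\alpha n=(r/s)n$ is always an integer, so $TV$ is well defined, and it is manifestly $\alpha$-pure with $H^*(TV)\cong V$. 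The convolution tensor product keeps us on the diagonal because $p_1=\alpha n_1$ and $p_2=\alpha n_2$ force $p_1+p_2=\alpha(n_1+n_2)$, so $T$ is a monoidal section of the cohomology functor; I will check the braidings below. It then suffices to prove that $T$ is a strong symmetric monoidal functor that is fully faithful and essentially surjective onto the pure subcategory, since such a functor is automatically a symmetric monoidal equivalence of $\infty$-categories, with the cohomology functor as quasi-inverse.

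For essential surjectivity I would use that every pure object is formal. A complex $C\in\ch{Gr\Vect_\KK}$ splits as a direct sum $C=\bigoplus_p C_{(p)}$ of its weight-graded pieces, each $C_{(p)}$ being an ordinary cochain complex of $\KK$-vector spaces sitting in weight $p$. The purity condition $H^n(C)_p=0$ for $p\neq\alpha n$ says that $H^*(C_{(p)})$ is concentrated in the single cohomological degree $n=sp/r$ (and $C_{(p)}$ is acyclic unless $r\mid p$). Over the field $\KK$ a cochain complex whose cohomology sits in a single degree is quasi-isomorphic to that cohomology, so each $C_{(p)}$ is formal; reassembling over all weights yields an equivalence $C\simeq TH^*(C)$, exhibiting $C$ in the essential image of $T$.

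The heart of the argument is full faithfulness, and this is where $\alpha\neq 0$ enters. I would compute the derived mapping complexes between the images of one-dimensional generators. For $X$ equal to $\KK$ in bidegree $(n_1,p_1)$ and $Y$ equal to $\KK$ in bidegree $(n_2,p_2)$, a morphism in $\ch{Gr\Vect_\KK}$ must preserve the weight grading, so $\mathrm{RHom}(X,Y)$ vanishes unless $p_1=p_2$, in which case it is $\KK$ placed in cohomological degree $n_2-n_1$. When $X=TV$ and $Y=TW$ are pure the weights are tied to the degrees by $p_i=\alpha n_i$, so the condition $p_1=p_2$ already forces $\alpha n_1=\alpha n_2$, hence $n_1=n_2$ because $\alpha\neq 0$, and therefore $n_2-n_1=0$. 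Thus the derived mapping complex is concentrated in degree $0$ and agrees with the discrete mapping set of $gr\Vect_\KK$; this is exactly the vanishing of the higher $\mathrm{Ext}$ groups that purity buys us, and it is what makes the pure $\infty$-category ``discrete''. Passing from generators to arbitrary pure objects by the weight splitting above, $T$ induces isomorphisms on all mapping spaces, so it is fully faithful.

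It remains to verify the symmetric monoidal compatibility, which I expect to be the most delicate bookkeeping and the conceptual crux. In $\ch{Gr\Vect_\KK}$ the braiding carries the Koszul sign of the cohomological degree only, since the monoidal structure on the weight-graded category $Gr\Vect_\KK$ is sign-free; thus a bihomogeneous $a\otimes b$ of bidegrees $(n_1,p_1)$ and $(n_2,p_2)$ is sent to $(-1)^{n_1n_2}b\otimes a$. Because $T$ records the \emph{cohomological} degree $n$ as the grading in $gr\Vect_\KK$, the target Koszul sign is likewise $(-1)^{n_1n_2}$, so the two braidings match and $T$ is strong symmetric monoidal. This is precisely why the essential image is described by degrees divisible by $s$ (the cohomological grading) rather than by weights divisible by $r$: had we graded the target by the weight $p$, the target sign $(-1)^{p_1p_2}$ would disagree with $(-1)^{n_1n_2}$ whenever $r-s$ is odd. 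With $T$ established as a strong symmetric monoidal, fully faithful, and essentially surjective functor, it is the desired symmetric monoidal equivalence of $\infty$-categories, and the cohomology functor is its quasi-inverse.
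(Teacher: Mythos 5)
Your proposal is correct and takes essentially the same route as the paper: the heart in both is the weight decomposition of derived mapping spaces, where weight preservation together with purity and $\alpha\neq 0$ forces the mapping complex to be concentrated in degree $0$, and essential surjectivity comes from the evident zero-differential lift of a pure graded vector space. Your explicit section $T$, the formality argument for pure complexes, and the Koszul-sign bookkeeping (including the observation that the target grading must be cohomological, not weight) simply spell out details the paper treats as clear from its $Gr\Vect_\KK$ versus $gr\Vect_\KK$ setup.
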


\begin{proof}
This functor is clearly essentially surjective. In order to prove that it is fully faithful, it suffices to observe the following formula for the homotopy groups of the mapping spaces in $\ch{Gr\Vect_\KK}$
\[\pi_k\mathrm{Map}_{\ch{Gr\Vect_\KK}}(C,D)=\bigoplus_{i\in\mathbb{Z}}[C_i,D_i[k]]\]
in particular, we see that if $C$ and $D$ are both $\alpha$-pure, all the higher homotopy groups are trivial and we have 
\[\pi_0\mathrm{Map}_{\ch{Gr\Vect_\KK}}(C,D)=\bigoplus_{j\in\mathbb{Z}}\Hom(H^{sj}(C_{rj}),H^{sj}(D_{rj}))\]
This shows that the cohomology functor is fully faithful.
\end{proof}

\begin{theo}\label{purity_implies_formality}
Let $\Oo$ be an operad in graded mixed Hodge structures which is $\alpha$-pure. Every $\Oo$-algebra in $\alpha$-pure mixed Hodge complexes is formal as an algebra in $\ch{\CC}$.
\end{theo}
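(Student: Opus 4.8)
The plan is to reduce the formality statement to the abstract equivalence established just before the theorem, passing through the symmetric monoidal $\infty$-categorical machinery so that all the operadic structure is transported automatically. First I would apply Proposition \ref{equivalenceproof} to identify the symmetric monoidal $\infty$-category associated to $\MHC_\CC$ with $\Dd(\MHS_\CC)$, and then restrict attention to the full subcategory of $\alpha$-pure objects. The key point is that the $\alpha$-purity condition cuts out, on the cohomology side, exactly the graded $\CC$-mixed Hodge structures concentrated in weights $p=\alpha n$; combined with Deligne's splitting lemma (Lemma \ref{DelSplit}), this forces the weight and cohomological gradings to be rigidly tied together, so that the homotopy category of $\alpha$-pure mixed Hodge complexes becomes equivalent to a suitable category of doubly-graded vector spaces. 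I would make this precise by combining the equivalence $\Dd(\MHS_\CC)\simeq\Ho(\MHC_\CC)$ with the purity analogue of the preceding proposition, identifying $\MHC_\CC^{\alpha\text{-pure}}$ (with quasi-isomorphisms inverted) symmetric-monoidally with a full subcategory of $Gr\Vect_\CC$.

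Next I would observe that this symmetric monoidal equivalence sends the weight grading to a formal $\ZZ$-grading and that, under the coprimality hypothesis $\alpha=r/s$, the weight and cohomological degrees determine each other: an $\alpha$-pure object lives in cohomological degree $n$ exactly when its weight is $\alpha n$, so the single grading on $Gr\Vect_\CC$ records both. The crucial structural input is that the braiding: on $Gr\Vect_\CC$ the weight grading carries the trivial Koszul-sign-free braiding, whereas cochain complexes carry the sign braiding. Because purity forces cohomological degree to be a fixed rational multiple of weight, the two braidings are intertwined by the equivalence in a controlled way, and I would check that the resulting comparison is symmetric monoidal precisely after introducing the sign twist implicit in the functor $T:gr\Vect_\CC\to\ch{\CC}$. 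This is the step that makes the diagram
\[
\begin{tikzcd}[ampersand replacement=\&]
\MHC_\CC^{\alpha\text{-pure}} \arrow[r] \arrow[d] \& gr\Vect_\CC \arrow[d, "T"] \\
\Ho(\ch{\CC}) \& \ch{\CC} \arrow[l]
\end{tikzcd}
\]
commute up to coherent symmetric monoidal equivalence.

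Having the symmetric monoidal equivalence in hand, I would then transport operadic algebras. Since $\Oo$ is an operad in $\alpha$-pure graded mixed Hodge structures, the underlying object of any $\Oo$-algebra $A$ in $\alpha$-pure mixed Hodge complexes, together with all its structure maps, lands in the image of the equivalence; hence $A$ is equivalent, as an $\Oo$-algebra in the $\infty$-category of $\alpha$-pure complexes, to an $\Oo$-algebra in the formal target $gr\Vect_\CC$. Applying $T$ and then the forgetful symmetric monoidal functor to $\ch{\CC}$ exhibits $A$ as lying in the homotopical essential image of $T:\Alg_\Oo(gr\Vect_\CC)\to\Alg_\Oo(\ch{\CC})$, which is exactly the definition of formality. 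The hard part will be verifying the compatibility of braidings described in the second paragraph: one must ensure that the weight-graded (sign-free) monoidal structure coming from mixed Hodge theory matches the cohomologically-graded (signed) monoidal structure after the $\alpha$-twist, so that $\Oo$-algebra structures genuinely transport; once this coherence is settled, the conclusion follows formally from the established equivalences.
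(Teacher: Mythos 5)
Your proposal follows essentially the same route as the paper: reduce via Proposition \ref{equivalenceproof} to complexes of $\CC$-mixed Hodge structures, use Deligne's splitting together with $\alpha$-purity to pass to graded objects, and transport $\Oo$-algebras through the resulting symmetric monoidal comparison so that formality follows from factoring the forgetful functor through $T$. Two points need repair, though neither is fatal to the strategy. First, your claim that $\MHC_\CC^{\alpha\text{-pure}}$ (with quasi-isomorphisms inverted) is symmetric-monoidally equivalent to a full subcategory of $Gr\Vect_\CC$ is too strong: morphisms of pure Hodge structures are required to preserve $F$ and $\ov{F}$, so the forgetful functor to graded vector spaces is faithful but not full, and no such equivalence exists. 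What the paper constructs --- and what your final argument actually uses --- is merely a symmetric monoidal \emph{functor}: the associated graded $G$ of the weight filtration $\ch{\MHS_\CC}\to\ch{Gr\Vect_\CC}$, followed by the cohomology equivalence of the proposition preceding the theorem, all homotopy-commuting (this is where Deligne's splitting enters) with the forgetful functors to $\ch{\CC}$. Formality needs only this functor plus the commuting square: the image algebra is formal, and by the square its underlying algebra in $\ch{\CC}$ is equivalent to that of $A$; no inverse equivalence is ever needed.

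Second, your sign bookkeeping is mislocated: there is no ``sign twist implicit in $T$'' --- the functor $T:gr\Vect_\CC\to\ch{\CC}$ is strictly symmetric monoidal. The actual resolution, which is the content of the proposition just before the theorem, is that the comparison lands in $gr\Vect_\CC$ (cohomological grading, Koszul braiding) rather than $Gr\Vect_\CC$ (weight grading, sign-free braiding): in $\ch{Gr\Vect_\CC}$ the Koszul signs come only from the cochain degree, purity pegs the weight of $H^n$ to $\alpha n$, and the cohomology functor transports the cohomological degree --- hence the signs --- unchanged, with the coprimality of $r$ and $s$ accounting for the restriction to degrees divisible by $s$. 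With these two corrections your outline coincides with the paper's proof.
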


\begin{proof}
An immediate consequence of Deligne's splitting gives a homotopy commutative diagram of symmetric monoidal $\infty$-categories
\[\xymatrix{
\ch{\MHS_\CC}\ar[r]^-{\simeq}\ar[d]_{G}&\MHC_{\CC}\ar[d]\\
\ch{Gr\Vect_{\CC}}\ar[r]&\ch{\CC}
}
\]
where the top map is the equivalence of Proposition \ref{equivalenceproof}, $G$ is the associated graded of the weight filtration and the unlabelled maps are the obvious forgetful maps.
This gives us a homotopy commutative square of $\infty$-categories
\[\xymatrix{
\Alg_{\Oo}(\ch{\MHS_\CC})\ar[r]^-{\simeq}\ar[d]_{G}&\Alg_{\Oo}(\MHC_{\CC})\ar[d]\\
\Alg_{\Oo}(\ch{Gr\Vect_{\CC}})\ar[r]&\Alg_{\Oo}(\ch{\CC})
}
\]
(in each case $\Oo$ should be considered as an operad in the relevant $\infty$-category but we have kept the same notation everywhere for simplicity).

We can restrict to $\alpha$-pure objects and we obtain a homotopy commutative square
\[\xymatrix{
\Alg_{\Oo}(\ch{\MHS_\CC}^{\alpha\text{-pure}})\ar[r]^-{\simeq}\ar[d]_{G}&\Alg_{\Oo}(\MHC_{\CC}^{\alpha\text{-pure}})\ar[d]\\
\Alg_{\Oo}(\ch{Gr\Vect_{\CC}}^{\alpha\text{-pure}})\ar[r]&\Alg_{\Oo}(\ch{\CC})
}
\]
From this diagram, we see that it suffices to check that the image of the bottom map consists of formal $\Oo$-algebras. 

By the previous proposition, we have a homotopy commutative triangle of symmetric monoidal $\infty$-categories
\[\xymatrix{
\ch{Gr\Vect_\CC}^{\alpha\text{-pure}}\ar[r]^-{H^*}\ar[dr]&gr\Vect_{\CC}\ar[d]^T\\
 &\ch{\CC}
}
\]
which induces a homotopy commutative square
\[\xymatrix{
\Alg_{\Oo}(\ch{Gr\Vect_\CC}^{\alpha\text{-pure}})\ar[r]^-{H^*}\ar[dr]&\Alg_{\Oo}(gr\Vect_{\CC})\ar[d]^T\\
 &\Alg_{\Oo}(\ch{\CC})
}
\]
Therefore the essential image of the diagonal map consists of formal $\Oo$-algebras as desired.
\end{proof}

\begin{rema}
 The proof of Theorem \ref{purity_implies_formality} is also of course valid for classical mixed Hodge complexes, whose weight filtration exists on a cochain complex defined over a field $\KK\subseteq \RR$. In this case, formality of the underlying algebra is obtained over $\KK$.
\end{rema}

\begin{rema}
One of the main results of \cite{CiHo} gives formality of the forgetful functor 
 \[\MHC_{\CC}^{\alpha\text{-pure}}\lra \ch{\KK}\]
 as a symmetric monoidal functor. This result, although closely related to Theorem \ref{purity_implies_formality} above does not seem to imply it. Indeed, given an $\Oo$-algebra $A$ in pure $\CC$-mixed Hodge complexes, the formality of this functor gives formality 
 of the pair $(\Oo,A)$ in the fibered category of algebras over all operads: assuming that $\Oo$ has trivial differential, this means that there are 
 quasi-isomorphisms of $\Oo_\infty$-algebras $F^*A\leftarrow \bullet\rightarrow G^*H(A)$, where $F,G:\Oo_\infty\to \Oo$ are (possibly distinct) quasi-isomorphisms of operads. Therefore in general, one does not seem to obtain formality of $A$ as an $\Oo$-algebra from the formality of functors.
\end{rema}

\subsection{Mixed Hodge theory of hypercommutative algebras}\label{HycomMHS}

The moduli spaces $\ov{\Mm}_{0,\bullet}$ are smooth complex projective schemes and as such, their homology carries pure Hodge structures.
Therefore we can view $\mathrm{Hycom}$ as an operad in graded $\CC$-mixed Hodge structures which is 1-pure.
The generator $m_i$ of $\mathrm{Hycom}(n)$, which corresponds to the fundamental class $[\ov{\Mm}_{0,n+1}]\in H_{2(n-2)}(\ov{\Mm}_{0,n+1})$, has pure weight $4-2i$, so that:
\[m_n\in W_{4-2n} F^{2-n}\ov {F}{}^{2-n}\mathrm{Hycom}(n)^{4-2n}.\]

In particular, a hypercommutative algebra in $\CC$-mixed Hodge complexes is just a mixed Hodge complex $(A,d,W,F,\ov F)$ together with the structure of a hypercommutative algebra $\{m_n\}$ with the above compatibility condition with respect to the filtrations $W$, $F$ and $\ov F$, so that 
\[\sum_{\sum m_i=m,\sum p_i=p, \sum q_i=q}W_{m_1}F^{p_1}\ov {F}{}^{q_1}A \otimes \cdots\otimes W_{m_n}F^{p_n}\ov {F}{}^{q_n}A\subseteq W_{m+4-2n} F^{p+2-n}\ov {F}{}^{q+2-n}A.\]

The operad $\BV$ can be viewed as an operad in graded mixed Hodge structures. This can be done in an ad-hoc manner by declaring that $\BV_k(n)$ is of pure Hodge type $(-k,-k)$ for any value of $n$. This can be given a geometric interpretation as the homology of a specific model of the framed little $2$-disks operad (see \cite{Vaintrob}). In particular, the arity 1 generator $\Delta$ has pure weight $-2$, so that 
\[\Delta\in W_{-2} F^{-1}\ov{F}{}^{-1}\mathrm{BV}(1)^{-1}.\]

We can also make $\BV/\Delta$ into a $\CC$-mixed Hodge complex by setting the generators $\vp_n$ of degree $-2n$, to be of pure weight $-2n$, so that 
\[\vp_n\in W_{-2n}F^{-n}{\ov{F}}{}^{-n}\BV/\Delta(1)^{-2n}.\]
With this definition, the differential $\delta$ of $\BV/\Delta$ has weight zero and is strictly compatible with respect to all filtrations.
Since $\Delta$ is trivial in homology, $\BV/\Delta$ is an operad in $\MHC_\CC$ which is $1$-pure.
Note as well that the map $\mathrm{Hycom}\to \BV/\Delta$ is compatible with all filtrations and so it is a map of 1-pure $\CC$-mixed Hodge complexes.

The following is now an immediate consequence of Theorem \ref{purity_implies_formality}:

\begin{coro}\label{hycomformal}
 Let $A$ be a hypercommutative algebra in 1-pure $\CC$-mixed Hodge complexes. Then $A$ is formal as a hypercommutative algebra in cochain complexes of $\CC$-vector spaces.
\end{coro}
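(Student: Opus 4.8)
The plan is to invoke Theorem \ref{purity_implies_formality} directly, taking $\Oo=\Hycom$ and $\alpha=1$. Once the hypotheses of that theorem are matched up with the data of the corollary, the conclusion is immediate, so the entire content of the proof lies in checking that $\Hycom$ satisfies those hypotheses and that the two notions of ``$1$-pure hypercommutative algebra'' coincide.

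First I would recall the mixed Hodge structure on $\Hycom$ described just above the statement. Since $\Hycom(n)=H_*(\ov{\Mm}_{0,n+1})$ is the homology of a smooth complex projective variety, it carries a pure Hodge structure, and the generator $m_n$, being the fundamental class in $H_{2(n-2)}(\ov{\Mm}_{0,n+1})$, sits in cohomological degree $4-2n$ and pure weight $4-2n$, i.e.\ $m_n\in W_{4-2n}F^{2-n}\ov{F}{}^{2-n}\Hycom(n)^{4-2n}$. Thus the weight coincides with the cohomological degree on each generator, which is exactly the condition that $\Hycom$ be an operad in graded $\CC$-mixed Hodge structures that is $1$-pure in the sense required by Theorem \ref{purity_implies_formality}. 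I would also note that the operadic composition maps are induced by the algebraic gluing maps among the spaces $\ov{\Mm}_{0,\bullet+1}$, hence are morphisms of Hodge structures, so that $\Hycom$ is genuinely an operad internal to graded $\CC$-mixed Hodge structures.

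Next I would observe that, by the compatibility condition spelled out in Section \ref{HycomMHS}, a hypercommutative algebra in $1$-pure $\CC$-mixed Hodge complexes is precisely a $\Hycom$-algebra in $\MHC_\CC^{\alpha\text{-pure}}$ with $\alpha=1$. With this identification in hand, Theorem \ref{purity_implies_formality} applied to $\Oo=\Hycom$ and $\alpha=1$ yields that $A$ is formal as a $\Hycom$-algebra in $\ch{\CC}$, which is the desired conclusion.

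The only genuine point requiring care, and the step I would treat most attentively, is lining up the weight conventions so that $\Hycom$ is $1$-pure: one must verify that the weight $4-2n$ attached to $m_n$ matches its cohomological degree $4-2n$ in every arity, so that $H^k$ is concentrated in weight $k$. This purity is ultimately inherited from the purity of the cohomology of smooth projective varieties, and once it is in place the remainder of the argument is a formal appeal to the already-established Theorem \ref{purity_implies_formality}.
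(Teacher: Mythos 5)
Your proposal is correct and matches the paper's argument exactly: the paper presents Corollary \ref{hycomformal} as an immediate consequence of Theorem \ref{purity_implies_formality}, with the substantive content being precisely the verification in Section \ref{HycomMHS} that $\Hycom$, viewed via the Hodge structures on $H_*(\ov{\Mm}_{0,\bullet+1})$, is an operad in graded $\CC$-mixed Hodge structures that is $1$-pure, the weight $4-2n$ of each generator $m_n$ coinciding with its cohomological degree. Your careful alignment of the weight and degree conventions is exactly the point the paper relies on, so there is nothing to add.
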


\begin{rema}
The main geometric situations that we will consider in the following section starts with a $\BV$-algebra which
can be made into a $\CC$-mixed Hodge complex in a compatible way. Since the underlying objects are compact Kähler manifolds, the cohomology is 1-pure (rather than 2-pure). Therefore one does not obtain formality of the $\BV$-algebra, but we will obtain formality of
the associated hypercommutative algebra, whose governing operad is 1-pure.
\end{rema}

\section{Hypercommutative algebra structures on complex manifolds}\label{secGeom}

In this last section we prove the main results of this paper. We consider three geometric situations that lead to $\BV$-algebras whose associated hypercommutative algebras are in fact algebras in $\CC$-mixed Hodge complexes. Using the ``purity implies formality'' theory, we deduce that such hypercommutative structures are formal.

\subsection{Canonical homotopy transfer on a Hermitian manifold}
Let $M$ be a compact complex manifold. Its
 complex de Rham algebra 
$\Aa^*:=\Aa^*_{\mathrm{dR}}(M)\otimes_\RR\CC$ admits a bidegree decomposition by forms of type $(p,q)$:
\[\Aa^n=\bigoplus_{p+q=n} \Aa^{p,q},\]
and the exterior differential decomposes into two components $d=\del+\delb$, where $\del$ has bidegree $(1,0)$ and $\delb$ is its complex conjugate.
Given a Hermitian metric $\langle-,-\rangle$ there is an associated Hodge-star operator 
\[\star:\Aa^{p,q}\lra \Aa^{m-q,m-p}\text{ defined by }\alpha\wedge \star \ov{\beta}:=\langle\alpha,\beta\rangle\mathrm{vol},\]
where  $\mathrm{vol}$
is the volume form determined by the metric and $m$ is the complex dimension of $M$. Let $\delta$ denote either $\del$ or $\delb$. The operator
 \[\delta^*:=-\star \ov{\delta} \star\] 
is the  $\Ll_2$-adjoint  of $\delta$, and we have $d^*=\delb^*+\del^*$.
Consider the associated Laplacian $\Box_\delta:=\delta\delta^*+\delta^*\delta$.
Hodge theory gives orthogonal direct sum decompositions 
\[\Aa^{p,q}=\Hh_\delta^{p,q}\oplus \Box_\delta(\Aa^{p,q})=\Hh_\delta^{p,q}\oplus\Img(\delta)\cap \Aa^{p,q}\oplus \Img(\delta^*)\cap \Aa^{p,q}.\]
where $\Hh_\delta^{p,q}:=\Ker(\Box_\delta)\cap \Aa^{p,q}$ denotes the space of $\delta$-harmonic forms in bidegree $(p,q)$.
Also, we have isomorphisms 
\[\Hh_\delta^{p,q}\cong H_\delta^{p,q}:={\Ker(\delta) \over \Img(\delta)}|_{(p,q)}\]
In particular, any $\delta$-cohomology class admits a unique $\delta$-harmonic representative.
Denote by $\pi_\delta:\Aa^{p,q}\to \Hh_\delta^{p,q}$ the projection and by 
$G_\delta:\Aa^{p,q}\to \Aa^{p,q}$ the Green operator defined by zero on $\delta$-harmonic forms and by the inverse of $\Box_\delta$  on their orthogonal complement. 

We have $G_\delta\Box_\delta=\Box_\delta G_\delta$ 
and any linear operator commuting with $\Box_\delta$ also commutes with $G_\delta$.
In particular, $G_\delta$ commutes with $\delta$ and $\delta^*$.
The following is a matter of verification.

\begin{lemm}\label{httDol} For a compact Hermitian manifold there is a deformation retract
$(\iota,\rho,h)$ for the complex $(\Aa,\delb)$,
where $\iota:H_\delb\to \Aa$ and $\rho:\Aa\to H_\delb$ are given by taking $\delb$-harmonic representatives and projecting to $\delb$-harmonic forms respectively and
 $h:=-\delb^* G_\delb$.
\end{lemm}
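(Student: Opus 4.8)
The plan is to verify directly that the triple $(\iota,\rho,h)$ satisfies the three defining conditions of a homotopy transfer diagram for the complex $(\Aa,\delb)$: that $\iota$ and $\rho$ are morphisms of complexes, that $\rho\iota=\Id$, and that $\delb h+h\delb=\Id-\iota\rho$. Everything will follow from the orthogonal Hodge decomposition recalled above together with the fact, already noted, that $G_\delb$ commutes with both $\delb$ and $\delb^*$.

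First I would record that every $\delb$-harmonic form is simultaneously $\delb$-closed and $\delb^*$-closed; in particular $\iota$ takes values in $\Ker(\delb)$, so since $H_\delb$ carries the zero differential, $\iota$ is a morphism of complexes. Dually, the orthogonal decomposition shows that $\Img(\delb)$ is orthogonal to $\Hh_\delb$, hence the harmonic projection $\pi_\delb$ annihilates $\Img(\delb)$; this gives $\rho\delb=0$, so $\rho$ is also a morphism of complexes. The relation $\rho\iota=\Id$ is then immediate from the isomorphism $\Hh_\delb\cong H_\delb$: the form $\iota[x]$ is by definition the unique $\delb$-harmonic representative of $[x]$, and applying $\rho$ reads off its class, returning $[x]$.

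Next I would identify the composite $\iota\rho$ with the harmonic projection $\pi_\delb\colon\Aa\to\Hh_\delb\hookrightarrow\Aa$, since $\rho$ extracts the harmonic part of a form and $\iota$ reinserts it as its own harmonic representative. The decisive step is the standard Hodge-theoretic identity $\Id=\pi_\delb+\Box_\delb G_\delb$. Using $\Box_\delb=\delb\delb^*+\delb^*\delb$ and the commutation of $G_\delb$ with $\delb$ and $\delb^*$, I would rewrite
\[
\Box_\delb G_\delb=\delb\delb^*G_\delb+\delb^*G_\delb\,\delb=\delb h+h\delb,
\]
so that $\delb h+h\delb=\Id-\pi_\delb=\Id-\iota\rho$, as required.

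There is no genuine obstacle here; this is precisely why the statement is introduced as ``a matter of verification''. The entire content is the orthogonality built into the Hodge decomposition and the commutation of the Green operator with $\delb$ and $\delb^*$, both of which have been recalled. The only point needing mild care is the identification $\iota\rho=\pi_\delb$ and the check that the two summands of $\Box_\delb G_\delb$ reassemble exactly as $\delb h+h\delb$; no sign subtleties intervene, because $h=\delb^*G_\delb$ appears on both sides and the rearrangement uses only $G_\delb\delb=\delb G_\delb$. The further relations satisfied by $h$ with respect to the second operator are not needed for the homotopy transfer diagram itself and are verified separately.
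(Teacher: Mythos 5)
Your proof is correct and is exactly the verification the paper has in mind (the paper omits it, introducing the lemma with ``The following is a matter of verification''): the orthogonal decomposition $\Aa=\Hh_\delb\oplus\Img(\delb)\oplus\Img(\delb^*)$ gives that $\iota,\rho$ are chain maps with $\rho\iota=\Id$ and $\iota\rho=\pi_\delb$, and the identity $\Id-\pi_\delb=\Box_\delb G_\delb=\delb h+h\delb$ (using $G_\delb\delb=\delb G_\delb$, which the paper records) yields the homotopy relation. No gaps; you also correctly note that the extra relations involving the second operator belong to the later lemmas, not to this one.
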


Assume now that $M$ is a compact Kähler manifold. 
We will be using the following main properties for the 
differential forms of Kähler manifolds (see for instance \cite{GH}).

\begin{enumerate}
 \item The Laplacian identities $\Box_d=2\Box_\delb=2\Box_\del$ identify all spaces of harmonic forms $\Hh_d^{p,q}=\Hh_\delb^{p,q}=\Hh_\del^{p,q}$ and so all cohomology vector spaces are canonically isomorphic: \[H_d\cong \bigoplus H_\delb^{p,q}\cong \bigoplus H_\del^{p,q}.\]
  \item We have the Kähler identity $\del\delb^*+\delb^*\del=0$.
 \item The double complex $(\Aa,\del,\delb)$ satisfies the $\del\delb$-condition:
 \[\Ker(\del)\cap \Img(\delb)=\Ker(\delb)\cap\Img(\del)=\Img(\del\delb).\]
\end{enumerate}

Lemma \ref{httDol} above gives a canonical deformation retract for the Dolbeault complex of a compact Hermitian manifold. There is an obvious de Rham version of the above result which is valid for any compact Riemannian manifold.
In general, given a deformation retract for the Dolbeault complex of a complex manifold,
one may obtain a deformation retract for the de Rham complex by perturbation of the Dolbeault data (see Theorem 2.7 of \cite{CiSo} and the remark thereafter).
In the Kähler case, such a perturbation turns out to be trivial:

\begin{lemm}\label{httdR}
 For a compact Kähler manifold the data $(\iota,\rho,h)$ of Lemma \ref{httDol} is also a deformation retract for the complex $(\Aa,d)$.
\end{lemm}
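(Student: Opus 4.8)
The plan is to verify directly the three defining properties of a homotopy transfer diagram for the de Rham complex $(\Aa,d)$, reusing verbatim the Dolbeault data $(\iota,\rho,h)$ of Lemma \ref{httDol} and showing that every extra term contributed by $\del$ cancels. First I would reinterpret the source and target of $\iota$ and $\rho$: by the Laplacian identity $\Box_d=2\Box_\delb$ of Kähler property (1), the $\delb$-harmonic forms coincide with the $d$-harmonic forms, $\Hh_\delb^{p,q}=\Hh_d^{p,q}$, so the canonical isomorphism $H_d\cong\bigoplus H_\delb^{p,q}$ lets me regard $\iota$ as a map $H_d\to\Aa$ and $\rho$ as a map $\Aa\to H_d$. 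Since the two harmonic subspaces agree and the orthogonal projection onto a subspace is unique, $\rho=\pi_\delb=\pi_d$, and $\iota\rho=\pi_\delb$.

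Next I would check that $\iota$ and $\rho$ are chain maps for $d$. Any $d$-harmonic form is $d$-closed, so $\iota$ lands in $\Ker(d)$ and $d\iota=0$; and $\rho d=\pi_d d=0$ because the image of $d$ is $L_2$-orthogonal to the harmonic forms. The normalization $\rho\iota=\Id$ is inherited directly from Lemma \ref{httDol}. It then remains only to establish the homotopy identity $dh+hd=\Id-\iota\rho$.

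This identity is the heart of the argument. Writing $d=\del+\delb$ splits the left-hand side as $(\delb h+h\delb)+(\del h+h\del)$. The first bracket is exactly $\Id-\pi_\delb=\Id-\iota\rho$, since it is the Hodge decomposition underlying Lemma \ref{httDol}, namely $\delb h+h\delb=\Box_\delb G_\delb=\Id-\pi_\delb$ (using that $G_\delb$ commutes with $\delb$). It therefore suffices to show that the second bracket vanishes. For this I would use that $G_\delb$ commutes with $\del$: on a Kähler manifold $\Box_\delb=\Box_\del$ commutes with $\del$, hence so does its Green operator. With $h=\delb^* G_\delb$ this gives
\[\del h+h\del=\del\delb^* G_\delb+\delb^*\del G_\delb=(\del\delb^*+\delb^*\del)G_\delb,\]
which is zero by the Kähler identity $\del\delb^*+\delb^*\del=0$ of property (2). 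Combining the two brackets yields $dh+hd=\Id-\iota\rho$, completing the verification.

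I expect the main obstacle to be precisely this last cancellation $\del h+h\del=0$: it is where Kählerness is genuinely used, both through the Kähler identity $\del\delb^*+\delb^*\del=0$ and through the commutation $[\del,G_\delb]=0$, and it is exactly what makes the perturbation of the Dolbeault homotopy data into de Rham data trivial, as announced before the statement. The degree bookkeeping is routine, since $\delb^*$ lowers total degree by one and $G_\delb$ preserves it, so $h$ has degree $-1$ as required.
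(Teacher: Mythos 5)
Your proof is correct and follows essentially the same route as the paper: the chain-map property of $\iota$ and $\rho$ via the coincidence of harmonic spaces on a Kähler manifold, and the key cancellation $\del h+h\del=(\del\delb^*+\delb^*\del)G_\delb=0$ using the Kähler identity together with the commutation of $\del$ with $G_\delb$ (via $\Box_\delb=\Box_\del$), which is exactly the paper's argument. The only cosmetic difference is that you justify $d\iota=0$ and $\rho d=0$ through $\Box_d=2\Box_\delb$ and $L_2$-orthogonality, whereas the paper invokes $\Hh_\delb=\Hh_\del=\Ker(\del)\cap\Ker(\del^*)$ to get $\del\iota=\rho\del=0$; both are immediate consequences of the same Kähler package.
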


\begin{proof}Since $\Hh_\delb=\Hh_\del=\Ker(\del)\cap \Ker(\del^*)$ 
we have $\del\iota=\rho\del=0$. 
This proves that $d\iota=\iota d$ and $d\rho=\rho d$.
Let us prove that $\del h + h\del=0$, where $h=-\delb^* G_\delb$. We have
\[\del\delb^* G_\delb+\delb^* G_\delb\del = 
(\del\delb^* +\delb^* \del) G_\delb=0,
\]
where we used the Kähler identity $\del\delb^*+\delb^*\del=0$ and the fact that, in the Kähler case, the operators $\del$ and $G_\delb$ commute (since any linear operator commuting with $\square_\delb$ also commutes with $G_\delb$, and for Kähler manifolds we have $\square_\delb=\square_\del$).
This gives $dh+hd=\delb h+h\delb=\iota\rho-\Id$.
\end{proof}

\subsection{Kähler manifolds: the $\BV$-algebras of Cao--Zhou}
On any Hermitian manifold, while $\del$ and $\delb$ are derivations with respect to wedge product of differential forms,
their respective formal adjoints  $\del^*$ and $\delb^*$
do not satisfy the Leibniz rule in general.
Instead, on a compact Kähler manifold the tuples 
\[\Aa_{\mathrm{Dol}}:=(\Aa,d=\delb,\wedge,\Delta=-i\del^*)\text{ and }\Aa_{\mathrm{dR}}:=(\Aa,d=\delb+\del,\wedge,\Delta=i(\delb^*-\del^*))\] are $\BV$-algebras, as proven in \cite{CaZh} and \cite{CaZh2}. Note that in both cases the Kähler condition is necessary already for the relation $d\Delta+\Delta d=0$ to be satisfied.
The following is straightforward:

\begin{lemm}Let $\Delta$ denote the BV-operator of either of the $\BV$-algebras $\Aa_{\Dol}$ and $\Aa_{\dR}$.
The deformation retract $(\iota,\rho,h)$ of Lemmas \ref{httDol} and \ref{httdR} satisfies
$\Delta \iota=\rho \Delta=h\Delta+\Delta h=0$.
\end{lemm}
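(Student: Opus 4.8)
The plan is to verify each of the three identities directly, handling $\Aa_{\Dol}$ and $\Aa_{\dR}$ in parallel. The one structural input I would isolate at the outset is that on a compact Kähler manifold all harmonic spaces coincide,
\[\Hh_\delb=\Hh_\del=\Ker(\del)\cap\Ker(\del^*)\cap\Ker(\delb)\cap\Ker(\delb^*),\]
so that $\iota$ lands in forms annihilated by each of $\del,\del^*,\delb,\delb^*$, and that $\rho$ is the orthogonal projection onto this common harmonic space followed by the harmonic identification with cohomology.

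First I would treat $\Delta\iota=0$. Since the image of $\iota$ consists of harmonic forms, and both $\del^*$ and $\delb^*$ annihilate harmonic forms, we get $\del^*\iota=\delb^*\iota=0$; hence $\Delta\iota=-i\del^*\iota=0$ in the Dolbeault case and $\Delta\iota=i(\delb^*-\del^*)\iota=0$ in the de Rham case. For $\rho\Delta=0$ I would argue dually: it suffices to show $\Img(\del^*)$ and $\Img(\delb^*)$ are orthogonal to the harmonic space. For harmonic $\omega$ (so $\del\omega=\delb\omega=0$) and arbitrary $\alpha$ one has $\langle\del^*\alpha,\omega\rangle=\langle\alpha,\del\omega\rangle=0$ and likewise $\langle\delb^*\alpha,\omega\rangle=0$, whence $\rho\del^*=\rho\delb^*=0$ and therefore $\rho\Delta=0$ in both cases.

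The last relation $h\Delta+\Delta h=0$, with $h=\delb^*G_\delb$, is where I expect the only genuine computation, though it stays routine. I would rely on two ingredients. First, the commutation of $G_\delb$ with $\del^*$ and $\delb^*$: for $\delb^*$ this is automatic, since $\delb^*$ commutes with its own Laplacian $\Box_\delb$ and hence with $G_\delb$; for $\del^*$ this is precisely the point where the Kähler identity $\Box_\delb=\Box_\del$ enters, giving that $\del^*$ commutes with $\Box_\delb$ and therefore with $G_\delb$. Second, the algebraic relations $(\delb^*)^2=0$ and $\delb^*\del^*+\del^*\delb^*=0$, which are the formal adjoints of $\delb^2=0$ and of $\del\delb+\delb\del=0$ (the latter following from $d^2=0$ by comparing bidegrees). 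For $\Aa_{\Dol}$, commuting $G_\delb$ past $\del^*$ collapses $h\Delta+\Delta h$ to $-i(\delb^*\del^*+\del^*\delb^*)G_\delb=0$. For $\Aa_{\dR}$, the same commutations collapse it to $i\bigl(2(\delb^*)^2-(\delb^*\del^*+\del^*\delb^*)\bigr)G_\delb$, which vanishes by the two relations above. The main obstacle, such as it is, is thus bookkeeping the signs and correctly locating the single use of the Kähler identity $\Box_\delb=\Box_\del$; everything else is a formal consequence of adjunction.
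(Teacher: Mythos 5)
Your proof is correct and follows exactly the verification the paper leaves implicit (the lemma is introduced with ``The following is straightforward''): the ingredients you use---the coincidence of harmonic spaces $\Hh_\delb=\Hh_\del$ so that $\iota$ lands in $\Ker(\del^*)\cap\Ker(\delb^*)$, orthogonality of $\Img(\del^*)$ and $\Img(\delb^*)$ to harmonic forms for $\rho\Delta=0$, the commutation of $G_\delb$ with any operator commuting with $\Box_\delb=\Box_\del$, and the adjoint relations $(\delb^*)^2=0$ and $\delb^*\del^*+\del^*\delb^*=0$---are precisely those the authors set up and deploy in the adjacent proof of Lemma \ref{httdR}. No gaps; your bookkeeping of where the Kähler hypothesis enters (only through $\Box_\delb=\Box_\del$, the anticommutation being formal from $d^2=0$) is accurate.
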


The above lemma together with Proposition \ref{hycombigraded} defines hypercommutative algebra structures on the Dolbeault and de Rham algebras of any compact Kähler manifold. Such structures are canonical, in the sense that they only depend on the Hermitian structure of the manifold (the complex structure together with the chosen compatible Kähler metric).

As is well known, on a  Kähler manifold the commutative algebras $(\Aa,\delb,\wedge)$ and $(\Aa,d,\wedge)$ are quasi-isomorphic and 
formal. In particular, we have a ring isomorphism $H_\delb\cong H_{\dR}$ and the induced $C_\infty$-structure in cohomology is the trivial one.
We promote this result to the hypercommutative case:

\begin{theo}\label{chineseformality}
For any compact Kähler manifold, the canonical hypercommutative structures defined on $\Aa_{\Dol}$ and $\Aa_{\dR}$
are formal and quasi-isomorphic.
\end{theo}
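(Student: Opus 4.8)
The plan is to realize both $\Aa_{\Dol}$ and $\Aa_{\dR}$ as hypercommutative algebras in $1$-pure $\CC$-mixed Hodge complexes, deduce formality from Corollary \ref{hycomformal}, and then exhibit a common homotopy model to obtain the quasi-isomorphism. The first thing I would record is how much data the two structures share: they have the same underlying space $\Aa$, the same product $\wedge$, and the same homotopy transfer diagram $(\iota,\rho,h)$ with $h=\delb^* G_\delb$ (Lemmas \ref{httDol} and \ref{httdR}). By Proposition \ref{hycombigraded} each $m_n$ is a composite of $\wedge$ with $n-2$ copies of the operator $h\Delta$. The key elementary observation is that this operator is \emph{the same} for both algebras: since $G_\delb$ commutes with $\delb^*$ and $(\delb^*)^2=0$, one has $\delb^* G_\delb\delb^*=0$, so in both cases $h\Delta=-i\,\delb^* G_\delb\del^*$, an operator of pure bidegree $(-1,-1)$. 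Consequently the two hypercommutative algebras have identical operations $\{m_n\}$ and differ only in their differential, $\delb$ versus $\delb+\del$.

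For formality I would equip $\Aa$ with the Hodge filtration $F$ (by holomorphic degree), its conjugate $\overline{F}$, and the weight filtration $W=\tau$ given by the canonical (good) truncation with respect to the relevant differential. Classical Kähler Hodge theory---the $\del\delb$-lemma and the resulting $E_1$-degeneration---shows that $(\Aa,d,W,F,\overline{F})$ is a $\CC$-mixed Hodge complex whose cohomology $H^n$ is pure of weight $n$, hence $1$-pure. It then remains to check that the operations respect the filtrations with the shifts prescribed in Section \ref{HycomMHS}. For $F$ and $\overline{F}$ this follows from the bidegree computation above: each $h\Delta$ lowers both $F$ and $\overline{F}$ by one and $\wedge$ is bifiltered, so $m_n$ shifts $F$ and $\overline{F}$ by $2-n$. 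For the weight I would use that $m_n$ is a multilinear morphism of complexes of cohomological degree $4-2n$; any such map automatically respects the canonical filtration $\tau$ with weight shift $4-2n$, the only nontrivial point being the top-degree case, which holds because a cochain map sends cocycles to cocycles. This makes $\Aa_{\Dol}$ and $\Aa_{\dR}$ into hypercommutative algebras in $1$-pure $\CC$-mixed Hodge complexes, and Corollary \ref{hycomformal} gives formality of each.

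For the quasi-isomorphism I would invoke the homotopy transfer theorem along $(\iota,\rho,h)$. Because the operations $\{m_n\}$, the product, and the transfer data are literally the same for $\Aa_{\Dol}$ and $\Aa_{\dR}$, the transferred structure on the space of harmonic forms is given by the same tree formulas in both cases, so it is one and the same homotopy hypercommutative algebra, equipped with quasi-isomorphisms to each of $\Aa_{\Dol}$ and $\Aa_{\dR}$. This common model exhibits the two as quasi-isomorphic.

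The step I expect to be the main obstacle is the weight compatibility. The building block $h\Delta$ does \emph{not} itself respect the canonical filtration: for a $d$-closed $x$, using $dh+hd=\Id-\iota\rho$ and $\rho\Delta=0$ one computes $d(h\Delta x)=\Delta x$, which is generally nonzero. The resolution is that only the assembled operation $m_n$, and not its constituent pieces, needs to be filtered, and $m_n$ is filtered precisely because it is a cochain map. Verifying the mixed Hodge complex axioms $(H_0)$--$(H_2)$ together with the $E_1$-degeneration for all three filtrations---for the Dolbeault differential as well as the de Rham one---is the other place where the full strength of Kähler Hodge theory enters.
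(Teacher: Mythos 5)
Your core argument coincides with the paper's: make $\Aa_{\dR}$ into a hypercommutative algebra in $1$-pure $\CC$-mixed Hodge complexes using the column and row filtrations $F,\ov{F}$ and the canonical filtration $W$, use the computation $h\Delta=-i\,\delb^* G_\delb\del^*$ (valid for both algebras, since $\delb^* G_\delb\delb^*=0$) to see that each $m_n$ has bidegree $(2-n,2-n)$, apply Corollary \ref{hycomformal}, and then exploit the fact that the two structures share the transfer diagram of Lemmas \ref{httDol} and \ref{httdR} to identify the induced $\Hycom_\infty$-structures on harmonic forms, which yields both the formality of $\Aa_{\Dol}$ and the quasi-isomorphism. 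Your explicit verification of the weight compatibility --- that a multilinear cochain map of degree $4-2n$ automatically respects the canonical filtration with shift $4-2n$, the only nontrivial case being a tuple of cocycles --- fills in a point the paper asserts without comment, and your warning that the building block $h\Delta$ does not itself preserve $\tau$ is correct and worth recording.

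There is, however, one genuine error in your announced plan: you cannot realize $\Aa_{\Dol}$ itself as an algebra in $\CC$-mixed Hodge complexes with the filtrations $(W,F,\ov{F})$, so no amount of Kähler Hodge theory will verify the axioms ``for the Dolbeault differential as well as the de Rham one.'' The differential $\delb$ is not strictly compatible with the row filtration $\ov{F}$: since $\delb$ raises antiholomorphic degree by one, $\delb(\ov{F}{}^q)$ lies in rows $\geq q+1$, whereas any $x$ of pure bidegree $(p,q-1)$ with $\delb x\neq 0$ produces a $\delb$-coboundary in $\ov{F}{}^q$ that is not a coboundary from $\ov{F}{}^q$ (already a non-holomorphic smooth function $f$ gives $\delb f\in\ov{F}{}^1$ with $\ov{F}{}^1\cap\Aa^0=0$). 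Equivalently, the $\ov{F}$-spectral sequence of $(\Aa,\delb)$ has $d_1=\delb$ and degenerates at $E_2$, not $E_1$, so axiom $(H_1)$ fails; the column filtration $F$ causes no trouble because it splits $(\Aa,\delb)$, but that does not save $(H_1)$ for $\ov{F}$. This is precisely why the paper applies the purity argument only to $\Aa_{\dR}$ and reaches $\Aa_{\Dol}$ indirectly. Fortunately, your own third paragraph already supplies exactly that indirect step, so your proof is repaired simply by deleting the claim that $\Aa_{\Dol}$ is an algebra in $1$-pure mixed Hodge complexes and treating the de Rham side as the sole input to Corollary \ref{hycomformal}, with the common transferred model doing the rest.
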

\begin{proof}
We will make $\Aa_{\dR}$ into a hypercommutative algebra in pure $\CC$-mixed Hodge complexes. 
Let $F$ and $\ov{F}$ denote the column and row filtrations respectively:
\[F^p\Aa^n:=\bigoplus_{q\geq p}\Aa^{q,n-q}\quad\text{ and }\quad \ov{F}{}^q\Aa^n:=\bigoplus_{p\geq q}\Aa^{n-p,p}.\]
Since the manifold is Kähler, the double complex $(\Aa,\del,\delb)$ satisfies the $\del\delb$-condition and so by 
Proposition 5.17 of \cite{DGMS} the spectral sequences associated to $F$ and $\ov{F}$ degenerate at $E_1$ and 
induce $n$-opposed filtrations on $H^n$. 
Let $W$ be the canonical filtration
\[
W_p\Aa^n=\left\{ 
\begin{array}{ll}
0&,p<n\\
\Ker(d:\Aa^n\to \Aa^{n+1})&,p=n\\
\Aa^n&,p>n
\end{array}
\right..
\]
Its associated spectral sequence degenerates at $E_1$ and $H^n(Gr_p^W\Aa)\cong H^n(\Aa)$ if $p=n$. 
This proves that the tuple $(\Aa,d,F,\ov{F},W)$ is a pure $\CC$-mixed Hodge complex.
Since 
\[h\Delta=i\delb^* G_\delb (\delb^*-\del^*)=-i \delb^* G_\delb \del^*\]
it follows that the hypercommutative operations $m_n$ on $\Aa_{\dR}$ have bidegree $(2-n,2-n)$. 
This gives \[m_n\in W_{4-2n}\cap F^{2-n}\cap \ov{F}{}^{2-n}\]
which agrees with the mixed Hodge structure on the operad $\Hycom$ given in Section \ref{HycomMHS}. Therefore the operations $m_n$ are compatible with all filtrations,
making $\Aa_{\dR}$ into an hypercommutative algebra in pure $\CC$-mixed Hodge complexes. By Corollary \ref{hycomformal},
it follows that $\Aa_{\dR}$ is a formal hypercommutative algebra.
This is equivalent to the formality of the $\Hycom_\infty$-structure induced on $H_{d}$, which is isomorphic to 
the one induced on $H_\delb$. 
Indeed, both structures are defined using canonical deformation retracts which coincide by Lemma \ref{httdR}.
In particular, $H_\delb$ is a formal $\Hycom_\infty$-algebra, which is equivalent to $\Aa_{\Dol}$ being a formal hypercommutative algebra.
Since both $\Aa_{\dR}$ and $\Aa_{\Dol}$ are formal hypercommutative algebras and the hypercommutative structures on $H_d$ and $H_\delb$ are isomorphic, it follows that $\Aa_{\dR}$ and $\Aa_{\Dol}$ are quasi-isomorphic.
\end{proof}

\begin{rema}\label{morebvs}
 The $\BV$-algebras studied in this section may actually be defined for any Hermitian manifold (not necessarily Kähler).
In the general situation
 one obtains, rather than a $\BV$-algebra, a $\BV_\infty$-algebra with $\BV$-operators $\Delta_k$ given by
\[\Delta_k={1\over k!}\underbrace{[\Lambda,[\Lambda,\cdots[\Lambda}_{k},\delb]]],\]
where $\Lambda$ is the adjoint of the Lefschetz operator.
When the manifold is Kähler, we have
\[\Delta_1=[\Lambda,\delb]=-i\del^*\text{ and }\Delta_k=0,\text{ for }k>1,\]
recovering in this case the $\BV$-algebra of Cao--Zhou.
Moreover, any $\BV_\infty$-algebra defined in this manner is quasi-isomorphic to its underlying commutative algebra.
In this setting, there is a natural solution to the gauge equation of the form $\varphi(z)=\Lambda z$,
which gives trivial hypercommutative structures. Note, however, that the
homotopy hypercommutative algebra structure canonically induced by the hermitian structure, through Lemma \ref{httDol},
is in general non-trivial and non-formal (we refer to \cite{CiWi} for all claims in this remark and further examples. See also \cite{Muro2} for the triviality of hypercommutative structures arising from certain gauge choices).
\end{rema}

We end this section with an example of a non-trivial hypercommutative structure in cohomology.  For this example, we actually consider a non-Kähler orbifold that satisfies the $\del\delb$-condition and such that $\delb\del^*+\del^*\delb=0$. The above theory applies to this broader setting as well.

\begin{exam}
Consider the Iwasawa manifold $M=H_{\ZZ[\sqrt{-1}]}\backslash H_\CC$, defined as the quotient of the complex Heisenberg  group $H_\CC$ by the subgroup of its $\ZZ[\sqrt{-1}]$ points.
Its complex algebra of left-invariant differential forms is isomorphic to the Chevalley-Eilenberg complex of the Lie algebra of $H_\CC$, and so it is given by the free commutative dg-algebra 
\[\Aa=\Lambda(a,b,c,\ov a,\ov b,\ov c)\text{ with }d(c)=-ab, d(\ov c)=-\ov a\ov b.\]
Here $a,b,c$ have bidegree $(1,0)$ and $\ov a,\ov b,\ov c$ have bidegree $(0,1)$. The inclusion of this algebra into the complex de Rham algebra of $M$ preserves bidegrees and induces an isomorphism on Dolbeault cohomology.
The Iwasawa manifold is a non-formal compact complex 3-fold and so it does not satisfy the $\del\delb$-condition. We will however produce a $ \del\delb$-manifold by constructing, out of $M$, an orbifold of global quotient type.
Consider the action $\sigma:\CC^3\to \CC^3$ 
by a finite group of biholomorphisms of order 4, given by $\sigma(z_1,z_2,z_3)=(iz_1,iz_2,-z_3)$.
This descends to a well-defined action on $M$. The resulting quotient 
$\widehat{M}=M/\langle\sigma\rangle$ is a compact orbifold with 16 isolated singular points. 
As shown in Theorem 7.1 of \cite{SfTo}, the orbifold $\widehat{M}$ admits a resolution into a smooth $\del\delb$-compact manifold.
The subalgebra  $\mathcal{A}(\widehat M)$ of $\sigma$-invariant differential forms on $M$ is generated by the elements
\[a\ov a, b\ov b, c\ov c, a\ov b, b\ov a, c\ov a\ov b, ab\ov c,  abc, \ov a\ov b\ov c.\]
The only non-trivial differentials being $\del(c\ov c)=-ab\ov c$, $\del(c\ov a\ov b)=-ab\ov a\ov b$ and their complex conjugates
$\delb(c\ov c)=c\ov a\ov b$, $\delb(a b\ov c )=-ab\ov a\ov b$. One easily checks that this algebra satisfies the 
 $\del\delb$-lemma. In particular, its Hodge-to de Rham spectral sequence degenerates at $E_1$.
We obtain:
\[H_\delb^{*,*}(\widehat{M})\cong 
\arraycolsep=4pt\def\arraystretch{1.4}
 \begin{array}{|c|c|c|c|}
 \hline
[\ov a\ov b\ov c]&0&0&[abc\ov a\ov b\ov c]\\\hline
0&0&[ac\ov a\ov c], [ac\ov b\ov c], [bc\ov b\ov c], [bc\ov a\ov c]&0\\\hline
0&[a\ov a], [b\ov b], [a\ov b], [b\ov a]&0&0\\\hline
1&0&0&[abc]\\\hline
\end{array}
\]
Considering the standard hermitian metric on $\Aa(\widehat{M})$ we obtain operators $\delb^*$ and $\del^*$.
We have 
$\del^*(ab\ov a\ov b)=-c\ov a\ov b$ and $\del^*(ab\ov c)=c\ov c$.
One easily checks that 
$\delb\del^*+\del^*\delb=0$ and that the $d\Delta$-condition is satisfied, with $d=\delb$ and $\Delta=-i\del^*$. Therefore the tuple $(\Aa(\widehat{M}), d=\delb,\wedge,\Delta=-i\del^*)$ is a $\BV$-algebra (since $d\Delta+\Delta d=0$)
and we may use Proposition \ref{hycombigraded} to compute
$\varphi_1=h\Delta=i\delb^* G_{\delb}\del^*$. We obtain
$\varphi_1(a b \ov a\ov b)=i c\ov c$ and $\varphi_1=0$ otherwise.
Using the formula of Example \ref{theta3} we compute a non-trivial operation of arity 3 and bidegree $(-1,-1)$ on cohomology: 
\[
m_3([a\ov a], [b\ov b], [b\ov b])=-2[\varphi_1(a\ov ab\ov b)\cdot b\ov b]=-2i[bc\ov b\ov c].
\]
\end{exam}

\begin{rema}
The operation of arity 3 computed in the above example is in fact a Gerstenhaber Massey product in the sense of Proposition 5.2 in \cite{Muro}.
In particular, this shows that,
while we have formality as a commutative, as a Lie, and as a hypercommutative algebra (by Theorem \ref{chineseformality}),
 the underlying Gernstenhaber algebra is not formal,
which in turn implies non-formality as a $\BV$-algebra. Note that this example does not fit in the construction explained in Remark \ref{morebvs}, for which conjugation by $\Lambda$ would give extra terms on the $\BV$-operator.
\end{rema}

\subsection{Calabi--Yau manifolds: the $\BV$-algebra of Barannikov-Kontsevich}\label{BVBK}
We recall the $\BV$-algebra considered by Barannikov and Kontsevich in \cite{BaKo} (see also Section III.9.12 of \cite{Maninbook}).
Let $M$ be a compact Calabi--Yau manifold of complex dimension $m$. This is a compact Kähler manifold $M$ with trivial canonical bundle. In particular, we may choose a nowhere vanishing holomorphic $m$-form $\Omega$.
Consider the complex bigraded algebra $(\Ll^{*,*},\wedge)$ given by 
\[\Ll^{p,q}:=\Gamma(M,\Lambda^p T\otimes \Lambda^q\ov{T}^*),\]
where $T$ denotes the holomorphic tangent bundle of $M$. The algebra structure is determined by 
the
exterior product $\wedge$.
There is an isomorphism of bigraded vector spaces $\eta:\Ll^{p,q}\to \Aa^{m-p,q}$ given by the contraction $\alpha\mapsto \alpha \vdash \Omega$
with the holomorphic $m$-form $\Omega$. Note that such an isomorphism does not preserve the product structures.
Given an operator $\delta:\Aa^{*,*}\to \Aa^{*+p,*+q}$ of pure bidegree $(p,q)$, we will denote 
\[\delta_\eta:=\eta^{-1}\circ\delta\circ \eta:\Ll^{*,*}\to \Ll^{*-p,*+q}.\]
the operator of bidegree $(-p,q)$ induced on $\Ll$ via the isomorphism $\eta$.
On the other hand, the Hermitian metric gives an isomorpism $T\cong \ov{T}^*$ inducing, for each $(p,q)$, a $\CC$-linear isomorphism
$g:\Ll^{p,q}\to \Ll^{q,p}$.

\begin{lemm}[c.f. \cite{CaZh}, Lemma 3.1]\label{involutedifferentials}The following identities are satisfied:
\[\arraycolsep=4pt\def\arraystretch{1.4}
\begin{array}{ll}
\delb_\eta=\Id\otimes\delb \,\,,&\del^*_\eta=-g\circ (\Id\otimes\delb)\circ g=-g \circ \delb_\eta\circ g
\\ \delb^*_\eta=\Id\otimes\delb^*,&
\del_\eta=-g\circ (\Id\otimes\delb^*)\circ g=-g \circ \delb^*_\eta \circ g.
\end{array}
\]
\end{lemm}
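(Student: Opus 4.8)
The plan is to prove the first identity $\delb_\eta=\Id\otimes\delb$ by a direct local computation, then to deduce the adjoint identity $\delb^*_\eta=\Id\otimes\delb^*$ formally by adjunction, and finally to obtain the two identities involving $\del_\eta$ and $\del^*_\eta$ from the Hodge-star formulas for the adjoints on a Kähler manifold, after identifying the metric conjugation $g$ with the Hodge star transported through $\eta$. Throughout I would work with the Calabi-Yau (Ricci-flat Kähler) metric, for which $\Omega$ is parallel and in particular $|\Omega|$ is a nonzero constant; this is what makes $\eta$ well behaved with respect to the Hermitian inner products. Note that only the first identity is metric-independent, the metric entering in an essential way in the remaining three.

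First I would prove $\delb_\eta=\Id\otimes\delb$. Working in local holomorphic coordinates $(z_1,\dots,z_m)$, write $\Omega=h\,dz_1\wedge\cdots\wedge dz_m$ with $h$ holomorphic and nowhere vanishing, and expand a section $\xi\in\Ll^{p,q}$ as $\xi=\sum f_{IJ}\,\partial_{z_I}\otimes\overline{dz}_J$ with $f_{IJ}$ smooth. Then $\eta(\xi)=\sum\pm f_{IJ}\,h\,dz_{I^c}\wedge\overline{dz}_J$, where $I^c$ denotes the complementary multi-index. Since $h$ is holomorphic and $dz_{I^c}$ is of type $(m-p,0)$, both are $\delb$-closed, so applying $\delb$ only differentiates the coefficients $f_{IJ}$ in the antiholomorphic directions; comparing with $\eta\big((\Id\otimes\delb)\xi\big)$ gives the identity. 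The one structural input here is $\delb\Omega=0$, i.e. the holomorphicity of $\Omega$, which lets $\delb$ pass through the contraction and act solely on the $\Lambda^q\overline{T}^*$-factor. I would also carry out the sign bookkeeping coming from moving $\overline{dz}_k$ past $dz_{I^c}$, but this is routine.

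Next, the identity $\delb^*_\eta=\Id\otimes\delb^*$ I would obtain by adjunction. The contraction $\xi\mapsto\xi\vdash\Omega$ is, fiberwise, a constant multiple (by $|\Omega|$) of a Hermitian isometry between $\Lambda^pT\otimes\Lambda^q\overline{T}^*$ and $\Aa^{m-p,q}$, since in an orthonormal holomorphic coframe the contraction sends dual basis vectors to basis covectors up to the factor $h$; as $|\Omega|$ is constant, $\eta$ is a constant multiple of an $L_2$-isometry. Hence for all $\xi,\xi'$ one has $\langle\eta^{-1}\delb^*\eta\,\xi,\xi'\rangle=\langle\delb^*\eta\xi,\eta\xi'\rangle=\langle\eta\xi,\delb\,\eta\xi'\rangle=\langle\xi,\eta^{-1}\delb\eta\,\xi'\rangle$, so $\delb^*_\eta$ is the formal adjoint of $\delb_\eta=\Id\otimes\delb$, and that adjoint is exactly the operator $\Id\otimes\delb^*$ for the tensor-product metric on $\Ll$.

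Finally, for the two remaining identities I would show that the metric conjugation $g$ is, up to sign and the constant $|\Omega|$, the Hodge star transported through $\eta$, that is $\eta\circ g\circ\eta^{-1}=\pm\star$ as operators $\Aa^{m-p,q}\to\Aa^{m-q,p}$; the bidegrees match since $\star\colon\Aa^{a,b}\to\Aa^{m-b,m-a}$. Granting this, the two identities reduce to the Hodge-star expressions for $\del^*$ and $\del$ on a Kähler manifold, which in these conventions read $\del^*=-\star\delb\star$ and $\del=-\star\delb^*\star$: transporting through $\eta$ and substituting $\star=\eta g\eta^{-1}$ yields $\del^*_\eta=-g\,\delb_\eta\,g$ and $\del_\eta=-g\,\delb^*_\eta\,g$. \textbf{The main obstacle} is precisely the pointwise linear-algebra identification $\eta\circ g\circ\eta^{-1}=\pm\star$: one must match the contraction with the top holomorphic form $\Omega$, the metric-induced swap $T\cong\overline{T}^*$ defining $g$, and the Hodge star, keeping track of the constant coming from $|\Omega|$ and of the signs produced by reordering holomorphic and antiholomorphic indices. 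This is where the Ricci-flat normalization of $\Omega$, so that $\Omega$ is parallel and $|\Omega|$ is constant, is genuinely used: it guarantees that these constants are globally uniform and that $\eta$ intertwines the Hermitian inner products.
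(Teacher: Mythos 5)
Your first half is fine: proving $\delb_\eta=\Id\otimes\delb$ from $\delb\Omega=0$ is exactly the part the paper dismisses as straightforward, and deducing $\delb^*_\eta=\Id\otimes\delb^*$ by adjunction is legitimate, since for the Ricci-flat metric $\Omega$ is parallel, $|\Omega|$ is constant, and $\eta$ is a constant multiple of a fiberwise isometry (the paper makes the same implicit use of the Calabi--Yau metric when it asserts $\Omega$ is parallel).

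The gap is in the half the paper actually proves. Your reduction of the $\del_\eta$ and $\del^*_\eta$ identities rests on the claim $\eta\circ g\circ\eta^{-1}=\pm\star$, which you defer as ``the main obstacle'' --- but as stated, with a single uniform constant, it is false, and the bidegree-dependent correction is essentially the whole content of the lemma. In an orthonormal parallel frame both $\eta g\eta^{-1}$ and $\star$ are complement-and-swap operators on monomials, but their ratio on $\Aa^{a,b}$ depends on $(a,b)$: already on an elliptic curve with $\Omega=e^1$ one computes $\eta g\eta^{-1}=\Id$ on $\Aa^{1,0}$ while $\star e^1=-e^1$ there, whereas on $\Aa^{0,0}$ the two maps agree ($\eta g\eta^{-1}(1)=e^1\wedge\bar e^1=\star 1$); depending on the normalization of $\mathrm{vol}$ the discrepancy even involves powers of $i$, as in the factor $i^{p-q}$ in Weil's formula for $\star$ on $(p,q)$-forms. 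So after writing $\eta g\eta^{-1}=c_{a,b}\,\star$ you must still verify that $c_{a,b}\,c_{m-b,m-a+1}=1$ on every bidegree before $-\star\delb\star$ transports to $-g\,\delb_\eta\,g$, and you must also pin down the sign in your formula $\del=-\star\delb^*\star$ (with the paper's convention $\delta^*:=-\star\ov{\delta}\star$, unwinding $\delb^*=-\star\del\star$ together with $\star\star=\pm 1$ tends to give $\del=+\star\delb^*\star$). None of this bookkeeping is carried out, and carrying it out is no easier than the lemma itself, so the detour through $\star$ buys nothing. The paper avoids $\star$ entirely: using that $\omega$ and $\Omega$ are parallel, it takes a local orthonormal frame with $\nabla e_i=\nabla\ov{e}_i=0$ at a point and $\Omega=e^1\wedge\cdots\wedge e^m$ up to a constant, invokes the local formulas $\del=e^i\wedge\nabla_{e_i}$, $\delb^*=-\ov{e}_i\vdash\nabla_{e_i}$ (and their analogues), and evaluates both $g(\Id\otimes\delb^*)g$ and $\del_\eta$ on a monomial $f\,e_I\otimes\ov{e}^J$, obtaining $-(\nabla_{e_i}f)(e^i\vdash e_I)\otimes\ov{e}^J$ and its negative respectively, with all signs computed on the spot. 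To repair your argument, either perform that frame computation of $\eta g\eta^{-1}$ against $\star$ bidegree by bidegree, or simply compare the two sides directly as the paper does.
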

\begin{proof}
The two identities on the left are straightforward. We prove the identity involving $\del_\eta$. 
Since both the Kähler form $\omega$ and the holomorphic $m$-form $\Omega$ are parallel with respect to the Levi-Civita connection,
on an open subset $U\subseteq M$
there is a local orthonormal frame field  $\{e_1,\cdots,e_m\}$ of the holomorphic tangent bundle $T=T^{1,0}M$ (so that 
for all $x\in U$, $\{e_1(x),\cdots,e_m(x)\}$ is an orthonormal basis of $T_p^{1,0}M$) such that, up to a constant,
$\Omega=e^1\wedge\cdots\wedge e^m$, and $\omega=\sum e^i\wedge \ov{e}^i$, where
 $\{e^1,\cdots,e^m\}$ denotes the dual coframe, defined by $e^j(e_i)=\delta^j_i$. 
Also, at the point $x$, we have
\[\nabla_{e_i}e_j=\nabla_{\ov{e}_i}e_j=\nabla_{e_i}\ov{e}_j=\nabla_{\ov{e}_i}\ov{e}_j=0\]
A standard computation shows that the operators $\del$, $\delb$ and their adjoints may be written locally as:
\[
\delb =\ov{e}^i\wedge \nabla_{\ov{e}_i}\quad,\quad\del^* =-{e}_i\vdash \nabla_{\ov{e}_i}\quad,\quad
\del ={e}^i\wedge \nabla_{{e}_i}\quad\text{ and }\quad\delb^* =-\ov{e}_i\vdash \nabla_{{e}_i},
\]
where $(X\vdash \alpha)(X_1,\cdots,X_{k-1}):=\alpha(X,X_1,\cdots,X_{k-1})$ is the contraction of a vector field $X$ and a $k$-form $\alpha$
(see for instance Lemma 3.3.4 of \cite{Jost}).

Near $x$ any element in $\Ll^{p,q}$ may be written as a sum of elements of the form $\alpha=fe_I\otimes \ov{e}^{J}$, where $I$ and $J$ denote subsets of $\{1,\cdots,m\}$ of cardinalities $p$ and $q$ respectively. We have 
\[
g(\Id\otimes \delb^*)g \alpha=g(\Id\otimes \delb^*)(fe_J\otimes \ov{e}^I)=
-(\nabla_{e_i}f) g (e_J\otimes \ov{e}_i\vdash \ov{e}^I)=-(\nabla_{e_i}f)({e}^i\vdash {e}_I)\otimes \ov{e}^J.
\]
On the other hand, we have 
\[
\del_\eta(\alpha)=\eta^{-1}\del \eta(\alpha)=\eta^{-1} \del(f(e_I\vdash \Omega)\otimes\ov{e}^{J})=\eta^{-1}(e^i\wedge \nabla_{e_i}f (e_I\vdash \Omega)\otimes \ov{e}^J=(\nabla_{e_i}f)({e}^i\vdash {e}_I)\otimes \ov{e}^J.
\]
The identity involving $\del^*_\eta$ follows analogously.

\end{proof}

Barannikov and Kontsevich noted in \cite{BaKo} that the complex $(\Ll^{*,*},\delb_\eta)$ carries a $\BV$-algebra structure
with the wedge product $\wedge$ and where the $\Delta$ operator is given by $\del_\eta$.
Note that $\delb_\eta$ has bidegree $(0,1)$ while $\del_\eta$ has bidegree $(-1,0)$. 
We next define a ``de Rham version'' of this $\BV$-algebra, in the sense that we add, to the differential and the $\Delta$ operator,
a component of bidegree $(1,0)$ and $(0,-1)$ respectively, both defined using the original operators together with the involution $g:\Ll^{p,q}\to \Ll^{q,p}$.

\begin{prop}The following tuples are all $\BV$-algebras:
\[\arraycolsep=4pt\def\arraystretch{1.4}
\begin{array}{ll}
\Ll_{\Dol}:=(\Ll^{*,*},\delb_\eta,\wedge,\del_\eta),\\
\Ll_{\Dol^*}:=(\Ll^{*,*},\del^*_\eta,\wedge,-\delb^*_\eta),\\
\Ll_{\dR}:=(\Ll^{*,*},d:=\delb_\eta+\del^*_\eta,\wedge,\Delta:=\del_\eta-\delb^*_\eta).
\end{array}
\]
\end{prop}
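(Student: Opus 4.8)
The plan is to verify, for each of the three tuples, the three defining axioms of a BV-algebra: (1) that the triple $(\Ll, d, \Delta)$ is a double complex, i.e. $d^2 = d\Delta + \Delta d = \Delta^2 = 0$; (2) that $(\Ll, d, \wedge)$ is a dg-commutative algebra; and (3) the seven-term relation expressing that $\Delta$ is a second-order operator. My strategy is to transport everything back to the de Rham side via the isomorphism $\eta$ together with the involution $g$, reducing each identity to a known fact about the Kähler operators $\del, \delb, \del^*, \delb^*$ acting on $\Aa^{*,*}$. The key computational input is Lemma \ref{involutedifferentials}, which expresses the four operators on $\Ll$ in terms of $\Id\otimes\delb$, $\Id\otimes\delb^*$, and conjugations by $g$.

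First I would dispose of $\Ll_{\Dol}$, since this is exactly the algebra of Barannikov-Kontsevich and its BV-structure is already cited; I would simply recall it. Next I would treat $\Ll_{\Dol^*}$. By Lemma \ref{involutedifferentials}, $\del^*_\eta = -g\circ\delb_\eta\circ g$ and $-\delb^*_\eta = -(\Id\otimes\delb^*)$, so conjugating the whole $\Ll_{\Dol}$ structure by the involution $g$ should carry it onto $\Ll_{\Dol^*}$ up to signs. Since $g$ is an algebra automorphism (being induced by a metric isomorphism $T\cong\ov T^*$ compatible with wedge), conjugation by $g$ transports a BV-structure to a BV-structure, and this gives $\Ll_{\Dol^*}$ almost for free. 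The substantive case is $\Ll_{\dR}$, where $d = \delb_\eta + \del^*_\eta$ and $\Delta = \del_\eta - \delb^*_\eta$.

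For $\Ll_{\dR}$, I would check the double-complex relations by expanding. The term $d^2 = (\delb_\eta + \del^*_\eta)^2$ splits into pure squares $\delb_\eta^2$, $(\del^*_\eta)^2$ (both zero) and the cross term $\delb_\eta\del^*_\eta + \del^*_\eta\delb_\eta$, which by Lemma \ref{involutedifferentials} equals the conjugate by appropriate maps of the Kähler identity $\delb\del^* + \del^*\delb = 0$ (the complex conjugate of $\del\delb^* + \delb^*\del = 0$). Similarly $\Delta^2 = 0$ reduces to $\del\delb^* + \delb^*\del = 0$, and the mixed relation $d\Delta + \Delta d = 0$ unpacks into a sum of the commuting relations $\delb_\eta\del_\eta + \del_\eta\delb_\eta = 0$, $\del^*_\eta\delb^*_\eta + \delb^*_\eta\del^*_\eta = 0$ together with $\delb_\eta\delb^*_\eta$-type cross terms that must cancel in pairs; these all descend to standard Kähler identities on $\Aa$. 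Axiom (2) is automatic: $\wedge$ is graded-commutative and $d$ satisfies Leibniz because both $\delb$ (hence $\delb_\eta = \Id\otimes\delb$) and $\del$ (hence, after conjugation by $g$, the operator $\del^*_\eta$) act as derivations on the relevant product — here I should be careful, since $\delb^*$ and $\del^*$ are \emph{not} derivations, so I must confirm that the specific combination making up $d$ and $\Delta$ behaves correctly under $\eta$.

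The main obstacle I anticipate is axiom (3), the second-order property of $\Delta = \del_\eta - \delb^*_\eta$. On the nose, $\del_\eta$ is a derivation on $\Ll$ (it corresponds under $\eta$ to $\del$, which is a derivation on $\Aa$ only after accounting for the contraction with $\Omega$), whereas $\delb^*_\eta$ is genuinely a second-order operator. The cleanest route is to invoke that $\Ll_{\Dol}$ is already a BV-algebra, so $\del_\eta$ satisfies the seven-term relation, and $\Ll_{\Dol^*}$ is a BV-algebra, so $\delb^*_\eta$ satisfies it too; since the seven-term relation is linear in $\Delta$, any linear combination $a\,\del_\eta + b\,\delb^*_\eta$ of two BV-operators sharing the same commutative product is again a BV-operator, provided the resulting $d$ and $\Delta$ still form a double complex. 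Thus the hardest verification reduces to the double-complex relations already outlined, and the second-order axiom follows formally by linearity from the two Dolbeault-type cases. I would organize the proof to exploit this linearity, so that the only genuine computation is checking $d^2 = \Delta^2 = d\Delta + \Delta d = 0$ for the mixed operators via Lemma \ref{involutedifferentials} and the Kähler identities.
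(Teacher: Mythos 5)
Your proposal follows essentially the same route as the paper: cite Barannikov--Kontsevich for $\Ll_{\Dol}$, transport that structure by the involution $g$ via Lemma \ref{involutedifferentials} to obtain $\Ll_{\Dol^*}$, verify the double-complex relations for $\Ll_{\dR}$ through the K\"ahler identities (the mixed relation being exactly $d\Delta+\Delta d=\eta^{-1}(\Box_\del-\Box_\delb)\eta=0$), and conclude by linearity of the Leibniz rule and of the seven-term relation, just as the paper's closing observation that a sum of BV-structures with the same product is BV whenever the summed operators form a double complex. One slip worth correcting: $\del_\eta$ is \emph{not} a derivation on $(\Ll,\wedge)$ --- $\eta$ does not preserve products, and the failure of $\del_\eta$ to be a derivation is precisely the Tian--Todorov bracket --- but this does not damage your argument, which only uses that $\del_\eta$ and $-\delb^*_\eta$ each satisfy the seven-term relation.
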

\begin{proof}
The first is the $\BV$-algebra of \cite{BaKo}. It is clear that $(\Ll^{*,*},\del^*_\eta,-\delb^*_\eta)$ is a double complex. Moreover,
by Lemma \ref{involutedifferentials} we have
\[
\del^*_\eta=-g \circ \delb_\eta\circ g
\quad\text{ and }\quad
\del_\eta=-g \circ \delb^*_\eta \circ g,
\]
where $g:\Ll^{p,q}\to \Ll^{q,p}$ denotes the involution induced by $T\cong \ov{T}^*$. Since this involution preserves algebra structures and $\Ll_{\Dol}$ is a $\BV$-algebra, it follows that the tuple $(\Ll^{*,*},-\delb^*_\eta,\wedge,-\del^*_\eta)$ is a $\BV$-algebra and so
$\Ll_{\mathrm{Dol^*}}=(\Ll^{*,*},\delb^*_\eta,\wedge,-\del^*_\eta)$ is also a $\BV$-algebra.
We now show that $\Ll_{\dR}$ is a $\BV$-algebra.
The conditions $d^2=\Delta^2=0$ are straightforward. Also, we have 
\[d\Delta+\Delta d=\eta^{-1}(\Box_\del-\Box_\delb)\eta=0.\] Note that for this last property, the minus sign in $\Delta=\del_\eta-\delb^*_\eta$ is important.
Therefore $(\Ll,d,\Delta)$ is a double complex. It now suffices to note that given $\BV$-algebras $(A,d_i,\wedge,\Delta_i)$ for $i=0,1$ then $(A,d_0+d_1,\wedge, \Delta_0+\Delta_1)$ is a $\BV$-algebra whenever $(A,d_0+d_1,\Delta_0+\Delta_1)$ is a double complex.
\end{proof}

In analogy with Lemma \ref{httdR} we have:

\begin{lemm}\label{httBK}The deformation retract $(\iota,\rho,h)$ of Lemma \ref{httDol} induces a deformation retract
$(\iota_\eta,\rho_\eta,h_\eta)$ for both complexes $(\Ll,\delb_\eta)$ and $(\Ll,\delb_\eta+\del^*_\eta)$.
Moreover, we have \[
\Delta \iota_\eta=\rho_\eta \Delta=h_\eta\Delta+\Delta h_\eta=0,\] where 
$\Delta$ denotes the BV-operator of either $\Ll_{\Dol}$ or $\Ll_{\dR}$.
\end{lemm}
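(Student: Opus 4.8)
The plan is to transport everything through the isomorphism $\eta$ and thereby reduce both assertions to statements about the original Dolbeault data $(\iota,\rho,h)$ on $\Aa$. By the very definition $\delta_\eta=\eta^{-1}\delta\eta$, the map $\eta$ is an isomorphism of cochain complexes $(\Ll,\delb_\eta)\to(\Aa,\delb)$ and also $(\Ll,\delb_\eta+\del^*_\eta)\to(\Aa,\delb+\del^*)$; for the latter I first record that $\delb+\del^*$ squares to zero, since $(\delb+\del^*)^2=\delb\del^*+\del^*\delb$ vanishes by the conjugate of the Kähler identity $\del\delb^*+\delb^*\del=0$. I then set $\iota_\eta:=\eta^{-1}\iota$, $\rho_\eta:=\rho\,\eta$ and $h_\eta:=\eta^{-1}h\,\eta$. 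Since conjugating a homotopy transfer diagram by an isomorphism of complexes again yields a homotopy transfer diagram, the case of $(\Ll,\delb_\eta)$ is immediate, and the de Rham case will follow once the corresponding statement is established on $\Aa$.

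The substance is therefore to show that the unmodified data $(\iota,\rho,h)$ of Lemma \ref{httDol} is a homotopy transfer diagram for $(\Aa,\delb+\del^*)$, exactly as in Lemma \ref{httdR} but with $\del$ replaced by $\del^*$. The chain-map conditions $(\delb+\del^*)\iota=0$ and $\rho(\delb+\del^*)=0$ reduce to $\del^*\iota=0$ and $\rho\del^*=0$: the first holds because $\iota$ lands in $\Hh_\delb=\Hh_\del=\Ker(\del)\cap\Ker(\del^*)\cap\Ker(\delb)\cap\Ker(\delb^*)$, and the second because $\Img(\del^*)$ is orthogonal to $\Hh_\del=\Hh_\delb=\Img(\rho)$. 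For the contraction identity it suffices, given $\delb h+h\delb=\Id-\iota\rho$, to prove $\del^*h+h\del^*=0$; writing $h=\delb^* G_\delb$ and using that $G_\delb=G_\del$ commutes with $\del^*$ (any operator commuting with $\Box_\delb=\Box_\del$ commutes with $G_\delb$), this becomes $(\del^*\delb^*+\delb^*\del^*)G_\delb=0$, where $\del^*\delb^*+\delb^*\del^*=0$ is the adjoint of the identity $\del\delb+\delb\del=0$. Transporting back through $\eta$ then delivers the homotopy transfer diagram for $(\Ll,\delb_\eta+\del^*_\eta)$.

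For the three $\Delta$-identities I conjugate once more: setting $\tilde\Delta:=\eta\Delta\eta^{-1}$, one has $\tilde\Delta=\del$ for $\Ll_{\Dol}$ and $\tilde\Delta=\del-\delb^*$ for $\Ll_{\dR}$, and the relations $\Delta\iota_\eta=\rho_\eta\Delta=h_\eta\Delta+\Delta h_\eta=0$ are equivalent to $\tilde\Delta\iota=\rho\tilde\Delta=h\tilde\Delta+\tilde\Delta h=0$. The first two follow again from harmonicity: $\del\iota=\delb^*\iota=0$ and $\rho\del=\rho\delb^*=0$ since the images of $\del$ and $\delb^*$ avoid $\delb$-harmonic forms. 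For the homotopy relation I combine the two computations $\del h+h\del=(\del\delb^*+\delb^*\del)G_\delb=0$ (the Kähler identity, as in Lemma \ref{httdR}) and $\delb^* h+h\delb^*=(\delb^*)^2G_\delb+G_\delb(\delb^*)^2=0$ (from $(\delb^*)^2=0$ and the commutation of $G_\delb$ with $\delb^*$); subtracting these handles $\Ll_{\dR}$, while the first alone handles $\Ll_{\Dol}$.

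The main obstacle is not conceptual but a matter of careful bookkeeping: verifying that $\delb+\del^*$ is genuinely a differential and that $G_\delb$ commutes with both $\del^*$ and $\delb^*$, both of which rest on the Kähler identities together with the coincidence $\Box_\delb=\Box_\del$. Once these are in place, every required identity collapses to a one-line manipulation, so there is no analytic difficulty beyond the Kähler package already recorded above.
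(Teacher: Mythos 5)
Your proof is correct and is exactly the argument the paper intends: the lemma is stated there without proof, ``in analogy with Lemma \ref{httdR}'', and your verification spells out precisely that analogy --- transport through $\eta$, the coincidence $\Hh_\delb=\Hh_\del$ and $\Box_\delb=\Box_\del$ (hence $G_\delb$ commuting with all four operators), the Kähler identity $\delb\del^*+\del^*\delb=0$ and its adjoint $\del^*\delb^*+\delb^*\del^*=0$, and $(\delb^*)^2=0$ for the $\Ll_{\dR}$ case. No gaps; the only implicit point, that conjugation by $\eta$ regrades total degree so that $\delb+\del^*$ becomes a degree-$+1$ differential on $\Ll$, is harmless since all your identities hold at the operator level.
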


The above lemma together with Proposition \ref{hycombigraded} defines canonical hypercommutative algebra structures on 
$\Ll_{\Dol}$ and $\Ll_{\dR}$ respectively. The main result of this paper is the following:

\begin{theo}\label{mainthm}
For any compact Calabi--Yau manifold, the canonical hypercommutative structures defined on $\Ll_{\Dol}$ and $\Ll_{\dR}$ are formal and quasi-isomorphic.
\end{theo}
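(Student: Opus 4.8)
The plan is to mirror the proof of Theorem \ref{chineseformality}, promoting $\Ll_{\dR}$ to a hypercommutative algebra in pure $\CC$-mixed Hodge complexes and then invoking Corollary \ref{hycomformal}. First I would equip the bigraded complex $(\Ll^{*,*},d=\delb_\eta+\del^*_\eta)$ with three filtrations: the column filtration $F$ and row filtration $\ov{F}$ coming from the bigrading, together with the canonical (truncation) weight filtration $W_p\Ll^n$. The whole point of introducing the de Rham version $\Ll_{\dR}$ — rather than working directly with $\Ll_{\Dol}$ — is that the added terms make both the row and column spectral sequences degenerate at $E_1$, which is precisely what is needed for $F$ and $\ov{F}$ to induce $n$-opposed filtrations on cohomology. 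Concretely, I expect the identities of Lemma \ref{involutedifferentials} to transport the $\del\delb$-condition on $(\Aa,\del,\delb)$ of the underlying Kähler manifold into the analogous degeneration statement on $\Ll$, via the isomorphism $\eta$ and the involution $g$; this is the key input that makes $(\Ll,d,F,\ov{F},W)$ a \emph{pure} $\CC$-mixed Hodge complex.

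Next I would verify that the hypercommutative operations $m_n$ are compatible with all three filtrations, so that they define a hypercommutative structure internal to $\MHC_\CC$. By Proposition \ref{hycombigraded} and Lemma \ref{httBK}, each $m_n$ is a finite composite of the wedge product with copies of $h_\eta\Delta$. I would compute the bidegree of $h_\eta\Delta$ explicitly: since $h_\eta=\delb^*_\eta G_{\delb_\eta}$ and $\Delta=\del_\eta-\delb^*_\eta$, the surviving term should have bidegree $(1,-1)$ so that $m_n$ has bidegree $(2-n,2-n)$, exactly as in the Kähler case. This places $m_n\in W_{4-2n}\cap F^{2-n}\cap \ov{F}{}^{2-n}$, matching the mixed Hodge structure on $\Hycom$ described in Section \ref{HycomMHS}. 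With compatibility established, Corollary \ref{hycomformal} yields formality of $\Ll_{\dR}$ as a hypercommutative algebra.

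The remaining issue is to deduce formality of $\Ll_{\Dol}$ and the quasi-isomorphism between the two structures. Here I would argue exactly as in Theorem \ref{chineseformality}: the hypercommutative structures on $H_{\delb_\eta}$ and on $H_d$ are both built from the canonical homotopy transfer data $(\iota_\eta,\rho_\eta,h_\eta)$ of Lemma \ref{httBK}, which is the same data for both complexes, so the induced $\Hycom_\infty$-structures on cohomology coincide under the canonical isomorphism $H_{\delb_\eta}\cong H_d$. Formality of $\Ll_{\dR}$ transfers to formality of the $\Hycom_\infty$-structure on $H_d$, hence on $H_{\delb_\eta}$, hence to formality of $\Ll_{\Dol}$; and since the two cohomology structures are isomorphic, the two formal hypercommutative algebras are quasi-isomorphic.

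I expect the main obstacle to be the first step: establishing that $\Ll_{\dR}$ really is a pure $\CC$-mixed Hodge complex, i.e. that both the row and column spectral sequences degenerate at $E_1$ and give $n$-opposed filtrations. This is the reason the extra terms $\del^*_\eta$ and $-\delb^*_\eta$ were built into $d$ and $\Delta$, and it requires carefully translating the Kähler $\del\delb$-condition on $\Aa$ through the contraction isomorphism $\eta$, which does \emph{not} respect the product or the naive bidegrees. Once purity is secured, the compatibility of the $m_n$ with the filtrations and the transfer-of-formality argument are essentially bidegree bookkeeping parallel to the Kähler case.
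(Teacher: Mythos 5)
Your proposal is correct and follows the paper's proof essentially step for step: purity of $(\Ll,d,F,\ov{F},W)$ via $E_1$-degeneration of the column and row filtrations plus the canonical filtration, compatibility of the $m_n$ with all three filtrations via the bidegree of $h_\eta\Delta$, formality from Corollary \ref{hycomformal}, and the transfer to $\Ll_{\Dol}$ exactly as in Theorem \ref{chineseformality}. One slip to fix: the bidegree of $h_\eta\Delta$ on $\Ll$ is $(-1,-1)$, not $(1,-1)$. Indeed $h_\eta\delb^*_\eta=\delb^*_\eta G_{\delb_\eta}\delb^*_\eta=(\delb^*_\eta)^2G_{\delb_\eta}=0$, and the surviving term $\delb^*_\eta G_{\delb_\eta}\del_\eta$ composes bidegrees $(0,-1)$ and $(-1,0)$ (recall that conjugation by $\eta$ sends an operator of bidegree $(p,q)$ on $\Aa$ to one of bidegree $(-p,q)$ on $\Ll$, so $\del_\eta$ has bidegree $(-1,0)$); your value $(1,-1)$ is not even of total degree $-2$, and had it been correct the operations would have bidegree $(n-2,2-n)$ --- precisely the pattern of the non-formal $\BV_1$ examples --- and the purity bookkeeping would break. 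Your stated consequence, $m_n\in W_{4-2n}\cap F^{2-n}\cap\ov{F}{}^{2-n}$, is nevertheless the right one. Also, on what you flag as the main obstacle: the paper does not translate the $\del\delb$-condition on $(\Aa,\del,\delb)$ through $\eta$; the relevant double complex is $(\Ll,\delb_\eta,\del^*_\eta)$, transported from the pair $(\delb,\del^*)$ on $\Aa$, and its $d\Delta$-condition follows in one line from the equality of Laplacians $\Box_{\delb_\eta}=(\Box_\delb)_\eta=(\Box_\del)_\eta=\Box_{\del_\eta}$ together with the K\"ahler identity $\delb\del^*+\del^*\delb=0$, which makes the two differentials anticommute --- the same formal Hodge-theoretic mechanism you have in mind, but requiring no delicate translation of products or bidegrees.
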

\begin{proof}
The proof is analogous to that of Theorem \ref{chineseformality}.
We first make $\Ll_{\dR}$ into a hypercommutative algebra in $\CC$-mixed Hodge complexes that is pure.
Note first that the components of the differential $d=\delb_\eta+\del^*_\eta$ of $\Ll^{*,*}$ have bidegrees $(0,1)$ and $(1,0)$ respectively.
Let $F$ and $\ov{F}$ be the column and row filtrations of $\Ll^{*,*}$ respectively.
The double complex $(\Ll,\delb_\eta,\del^*_\eta)$ satisfies the $\delb_\eta\del^*_\eta$-condition.
This follows from the fact that the Laplacians 
\[\Box_{\delb_\eta}:=\delb_\eta\delb_\eta^*+\delb_\eta^*\delb_\eta=(\Box_\delb)_\eta\quad\text{ and }\quad
 \Box_{\del_\eta}:=\del_\eta\del_\eta^*+\del_\eta^*\del_\eta=(\Box_\del)_\eta
\]
coincide, together with
the Kähler identity
$\delb\del^*+\del^*\delb=0$, which makes $\delb_\eta$ and $\del^*_\eta$ anticommute.
Therefore by \cite[Proposition 5.17]{DGMS} the spectral sequences associated to $F$ and $\ov{F}$ degenerate at $E_1$ and 
induce $n$-opposed filtrations on $H^n_d(\Ll)$, where $d=\delb_\eta+\del^*_\eta$.
Let $W$ be the canonical filtration. 
Its associated spectral sequence degenerates at $E_1$ and $H^n(Gr_p^W\Ll)\cong H^n(\Ll)$ if $p=n$. 
This proves that the tuple $(\Ll,d,F,\ov{F},W)$ is a pure $\CC$-mixed Hodge complex.
The bidegree of $h_\eta \Delta$ is $(-1,-1)$. Indeed, we have 
\[h_\eta\Delta=-\eta^{-1}(i\delb^* G_\delb)\eta (\del_\eta-\delb^*_\eta)=
-i \eta^{-1} (\delb^*\del G_\delb -\delb^*\delb^* G_\delb)\eta
=-i \eta^{-1} \delb^*\del G_\delb \eta.\]
It follows that the hypercommutative operations $m_n$ have bidegree $(2-n,2-n)$. 
This gives \[m_n\in W_{4-2n}\cap F^{2-n}\cap \ov{F}{}^{2-n}\]
which agrees with the mixed Hodge structure on the operad $\Hycom$ of Section \ref{HycomMHS}. This makes $\Ll_{\dR}$ into a hypercommutative algebra in pure $\CC$-mixed Hodge complexes, and so $\Ll_{\dR}$ is a formal hypercommutative algebra by Corollary 
\ref{hycomformal}.
This proves formality of $\Ll_{\dR}$.
The proof now follows exactly as in Theorem \ref{chineseformality}.
\end{proof}

\subsection{Hermitian manifolds: when $\Delta$ is a derivation}
We end this paper by considering a version of a $\BV$-algebra $(\Aa,d,\cdot,\Delta)$ for which $\Delta$ is a derivation and so its associated Lie bracket is trivial.
Such algebras are called $\BV_1$-algebras in \cite{DST}, where $\Delta$ is understood as an operator of order one.
In this case, there is a quasi-isomorphism of operads $\Hycom_1\to \BV_1/\Delta$,
where  $\Hycom_1$ is generated by elements $\nu_i$ of
 arity 2 and degree $|\nu_i|=-2i$ satisfying
\[\sum_{i+j=k} \nu_i\circ_1\nu_j=\sum_{i+j=k} \nu_i\circ_2\nu_j.\]
In analogy with Example \ref{theta3}, the lowest terms are $\theta_0(x,y)=x\cdot y$ and
\[\theta_1(x,y)=\varphi_1(x\cdot y)-\varphi_1(x)\cdot y-x\cdot \varphi_1(y).\]

The following proposition shows that, under the $d\Delta$-condition, the $\Hycom_1$-algebra
induced by a deformation retract is trivial, in the sense that it is quasi-isomorphic to the underlying commutative algebra.
\begin{prop}\label{Geofftrivial}
 Assume that a $\BV_1$-algebra $(\Aa,d,\cdot,\Delta)$ satisfies the $d\Delta$-condition. Then:
 \begin{enumerate}[(1)]
  \item $(\Aa,d,\wedge,\Delta)$ is formal as a $\BV$-algebra.
  \item For any deformation retract $(\iota,\rho,h)$  satisfying
$\Delta \iota=\rho \Delta=h\Delta+\Delta h=0,$
  the induced $\Hycom_1$-algebra structure is formal and quasi-isomorphic to the commutative algebra $(H,\wedge)$ \end{enumerate}
\end{prop}
\begin{proof}
 By Lemma \ref{quises} the $d\Delta$-condition gives quasi-isomorphisms of $\BV$-algebras 
\[(\Aa,d,\Delta)\hookleftarrow (\Ker\Delta,d,0)\twoheadrightarrow(H,0,0).\]
By Lemma \ref{HTData} there is a deformation retract $(\iota,\rho,h)$ such that
\[\Delta \iota=\rho\Delta=h\Delta+\Delta h=0,\]
giving a hypercommutative algebra structure on $(\Aa,d,\cdot)$. The above zig-zag of quasi-isomorphisms extends to a zig-zag of deformation retracts and so it gives trivial data on the complex $(H,0)$.
\end{proof}

\begin{exam}
The Dolbeault algebra $(\Aa,\delb,\wedge)$ of any compact Kähler manifold, with the operator $\Delta=\del$, fits in the setting of Proposition \ref{Geofftrivial}. In fact, only the $\del\delb$-condition is needed, so the proposition applies as well to any
compact complex manifold which can be blown-up to a Kähler manifold.
\end{exam}

Assuming the weaker condition of Hodge-to-de-Rham degeneration data $(\iota,\rho,h)$ on a $\BV_1$-algebra,
there is still an induced $\Hycom_1$-algebra structure.
This structure is not formal in general, as shown in the following example.

\begin{exam}
The Kodaira-Thurston manifold is 
the $4$-dimensional compact nilmanifold given by the quotient
$\mathrm{KT}:=H_\ZZ \times \ZZ \setminus H_\RR \times \RR$
where $H_\RR$ is the Heisenberg real Lie group of dimension 3 and $H_\ZZ$ is the integral subgroup.
This manifold admits a left-invariant complex structure, making it into a non-Kähler primary Kodaira surface.
Its complex homotopy type may be described as follows:
consider the free bigraded commutative algebra
\[\Aa:=\Lambda(a,\ov a,b,\ov b)\quad\text{ with }|a|=|b|=(1,0)\text{ and }|\ov a|=|\ov b|=(0,1).\]
Define $\delb$ on $\Aa$ by $\delb b=-ia\ov a$ and extending multiplicatively and let $\del$ be defined by complex conjugation.
There is an inclusion of $\Aa$ into the complex de Rham algebra of $\mathrm{KT}$ which preserves
the bidegrees and induces an isomorphism in Dolbeault cohomology (see Example 4.3 of \cite{CiSo} for details).
We obtain:
\[H_\delb^{*,*}(KT)\cong 
\arraycolsep=4pt\def\arraystretch{1.4}
 \begin{array}{|c|c|c|c|}
 \hline
[\ov a \ov b]&[b \ov a \ov b ]&[ab\ov a\ov b] \\
  \hline
 [\ov a], [\ov b]&[a\ov b], [b \ov a ]&[ab\ov a ], [ab\ov b] \\
   \hline
 1&[a]&[ab]\\ 
 \hline
\end{array}
\]
Since $\mathrm{KT}$ is a compact complex surface, its Frölicher spectral sequence degenerates at the first page, and so the above gives
 the complex de Rham cohomology of $\mathrm{KT}$. Considering the standard hermitian metric on $\Aa$ 
the deformation retract $(\iota,\rho,h)$ of Lemma \ref{httDol} gives
\[h(a\ov a):=-\delb^* G_\delb(a\ov a)=*\delb *(a\ov a)=* \delb(b\ov b)=i*a\ov a\ov b=ib,\]
\[h(a\ov a \ov b):=-\delb^* G_\delb(a\ov a  \ov b)=*\delb *(a\ov a  \ov b)=* \delb(b)=i*a\ov a=ib\ov b,\]
 and $h=0$ otherwise.
Moreover, this data satisfies $\rho\Delta\iota=0$ and $\Delta h \Delta \iota=0$ and so the Hodge-to-de-Rham degeneration condition is satisfied.
Letting $\varphi_1=h\Delta-\iota\rho\Delta h$, we have
\[\varphi_1(\ov b)=b,\, \varphi_1(a\ov a\ov b)=ab\ov a\text{ and }\varphi_1=0\text{ otherwise},\]
while $\varphi_n=0$ for all $n>1$. Using the formula
\[\nu_1(x,y)=\varphi_1(x\cdot y)-\varphi_1(x)\cdot y-x\cdot \varphi_1(y)\]
we get non-trivial $\Hycom_1$-operations in cohomology:
\[\nu_1([\ov a],[\ov b])=[b\ov a]\text{ and }\nu_1([a],[\ov b])=-[ab].\]
There are also non-trivial $C_\infty$-operations of arity 3 and bidegree $(0,-1)$:
\[\mu_{3}([a],[a],[\ov a])=i[ab]\quad\text{ and }\quad\mu_3([a],[\ov a],[\ov a])=-i[b\ov a].\]
This shows that the $\Hycom_1$-structure is not formal and carries more information than the usual $C_\infty$-structure.
\end{exam}

\bibliographystyle{alpha}
\bibliography{biblio}

\end{document}